\documentclass[lettersize,10pt]{amsart}
\usepackage{amssymb,amscd, epsfig, mathrsfs, xypic, amsmath,amsthm, dsfont}
\usepackage{amsthm}
\xyoption{all} 
\addtolength{\oddsidemargin}{-0.70in}
\addtolength{\evensidemargin}{-0.70in}
\addtolength{\topmargin}{-0.60in} 
\addtolength{\textheight}{0.83in}
\addtolength{\textwidth}{1.4in}



\numberwithin{equation}{section}

\theoremstyle{plain}
\newtheorem{theorem}{Theorem}[section]
\newtheorem{proposition}[theorem]{Proposition}         
\newtheorem{corollary}[theorem]{Corollary} 
\newtheorem{lemma}[theorem]{Lemma} 


\theoremstyle{definition}  
\newtheorem{definition}[theorem]{Definition}
\newtheorem{example}[theorem]{Example} 
\newtheorem{remark}[theorem]{Remark}

\newcommand{\sfG}{\mathsf G}
\newcommand{\sfT}{\mathsf T}

\newcommand{\sfK}{\mathsf K}
\newcommand{\C}{\mathbb C}   
\newcommand{\R}{\mathbb R}
\newcommand{\Q}{\mathbb Q}
\newcommand{\Z}{\mathbb Z}

\renewcommand{\P}{\mathbb P}

\DeclareMathOperator{\Lie}{Lie}
\newcommand{\w}{\mathrm{w}}
\newcommand{\twt}{a}
\newcommand{\wc}{\w \tilde{c}}
\newcommand{\trc}{\tilde{c}}
\newcommand{\wR}{{T_{\mathrm{w}}}}
\newcommand{\wRR}{{T_{\mathrm{w}}^{\R}}}
\newcommand{\trR}{T}
\newcommand{\trRR}{T^{\R}}
\newcommand{\af}{\mathrm{a}}
\newcommand{\wGr}{\mathrm{wGr}(d,n)}   
\newcommand{\Gr}{\mathrm{Gr}(d,n)}   
\newcommand{\aGr}{\mathrm{aPl}(d,n)}
\newcommand{\aGrt}{\mathrm{aPl}(d,n)^{\times}}
\newcommand{\Ctimes}{\mathbb{C}^{\times}}

\newcommand{\bmH}{\overline{H}}

\newcommand{\di}{\mathrm{div}}
\newcommand{\id}{\mathrm{id}}
\newcommand{\tS}{\tilde{S}}
\newcommand{\aS}{\mathrm{a}\tilde{S}}
\newcommand{\wS}{\w\tilde{S}}

\newcommand{\str}{\left\{\!\begin{smallmatrix}n\\d\end{smallmatrix}\!\right\}}
\newcommand{\strEG}{\left\{\!\begin{smallmatrix}4\\2\end{smallmatrix}\!\right\}}

\newcommand{\wD}{\w\!D}
\newcommand{\wDR}{\w\!D^{\R}}

\newcommand{\wu}{\w u}
\newcommand{\tow}{y^w}

\newcommand{\KC}{K}
\newcommand{\KR}{K^{\R}}
\newcommand{\hmap}{\pi}
\newcommand{\whmap}{\pi_{\w}}
\newcommand{\kmap}{\kappa}
\newcommand{\wkmap}{\kappa_{\w}}
\newcommand{\wy}{y^{\mathrm{w}}}
\newcommand{\zz}{z}

\makeatletter

\newcommand{\Rmnum}[1]
{\expandafter\@slowromancap\romannumeral #1@}
\makeatother

\DeclareMathOperator{\invv}{inv}

\begin{document}     

\title[]
{Equivariant cohomology of weighted Grassmannians \\ and weighted Schubert classes}

\author{Hiraku Abe and Tomoo Matsumura}

\date{\today}
\keywords{weighted Grassmannians, orbifolds, torus actions, equivariant cohomology, structure constants, Schubert calculus.} \subjclass[2010]{Primary: 14N15\,\,; Secondary: 55N91, 57R18}

\maketitle

\begin{abstract}
In this paper, we study the $\wR$-equivariant cohomology of the weighted Grassmannians $\wGr$ introduced by Corti-Reid \cite{CR} where $\wR$ is the $n$-dimensional torus that naturally acts on $\wGr$. We introduce the equivariant \emph{weighted Schubert classes} and, after we show that they form a basis of the equivariant cohomology, we give an explicit formula for the structure constants with respect to this Schubert basis. We also find a linearly independent subset $\{\wu_1,\cdots, \wu_n\}$ of $\Lie(\wR)^*$ such that those structure constants are polynomials in $\wu_i$'s with non-negative coefficients, up to a permutation on the weights.
\end{abstract}


\section{Introduction} \label{section-intro}

The \emph{weighted Grassmannian} $\wGr$ introduced and studied by Corti-Reid \cite{CR}, following the work of Grojnowski, is a projective variety with at worst orbifold singularity with a torus action. It is a generalization of the ordinary Grassmannian and is defined in a weighted projective space by the well-known \emph{Pl\"{u}cker relations} as weighed homogeneous polynomials with appropriate weights. In this paper, we define the \emph{weighted Schubert classes} and show that they will form a basis of the equivariant/non-equivariant cohomology of $\wGr$ over $\Q$-coefficients. Our main goal is to study the structure constants of the cohomology rings with respect to these weighted Schubert classes. The explicit formula of these structure constants for the weighted Grassmannian is derived from any formula for the ordinary Grassmannian (for example, the Knutson-Tao's puzzle formula \cite{KT}), by detouring to the equivariant cohomology of the quasi-projective variety $\aGrt$ defined in the affine space by the Pl\"{u}cker relations. We have found appropriate equivariant parameters in which the equivariant structure constants are polynomials with non-negative rational coefficients when the weights are non-decreasing (hence this implies that the structure constants of the ordinary cohomology are also non-negative). This is an analogue of the equivariant positivity proved by Graham \cite{GrahamPos}, although we do not have the geometric or representation-theoretic interpretation of those parameters, while as, in \cite{GrahamPos}, they are the simple roots in the character group of the maximal torus when we regard the flag varieties as homogeneous varieties. 

Below we summarize our results in detail. Recall that the ordinary Grassmannian $\Gr$ is the space of $d$-dimensional subspaces in the $n$-dimensional complex plane $\C^n$. It can be described as a non-singular projective variety of dimension $d(n-d)$ defined by the well-kwown homogeneous polynomials, called the \emph{Pl\"ucker relations}. It is embedded in the projective space $\P(\C^{\str})$ where $\str:=\{\{\lambda_1,\cdots,\lambda_d\} \ |\ 1\leq \lambda_1 < \cdots < \lambda_d \leq n\}$ and $\C^{\str}$ is the affine space of the pl\"{u}cker coordinates. Let $\aGr$ be the affine variety in $\C^{\str}$ defined by the pl\"{u}cker relation and let $\aGrt:=\aGr - \{0\}$. The $(n+1)$-dimensional complex torus $\KC:=(\C^{\times})^n \times \C^{\times} = \trR \times \C^{\times}$ naturally acts on $\aGr$ and $\aGrt$, through the homomorphism
\[
\rho : \KC \to (\C^{\times})^{\str}, \ \ (t_1,\cdots, t_n,s) \mapsto \left( \ st_{\lambda}\ \right)_{\lambda \in \str}, \ \ \mbox{where}\  t_{\lambda}:=t_{\lambda_1} \cdots t_{\lambda_d}.
\]
The Grassmannian $\Gr$ is the quotient of $\aGrt$ by the $\C^{\times}$-action of the last component of $\KC$. Following \cite{CR}, we define the \emph{weighted Grassmannian} $\wGr$ as the quotient of $\aGrt$ by the \emph{locally free} action of a ``twisted diagonal" $\wD$ in $\KC$: for $w:=(w_1,\cdots, w_n) \in (\Z_{\geq 0})^n$ and $a \in \Z_{\geq 1}$, let 
\begin{align*}
\wD := \{(t^{w_1},\dots,t^{w_n}, t^a) \in \KC \mid t\in \C^{\times} \}.
\end{align*}
The weighted Grassmannian is defined as 
\[
\wGr:= \aGrt/\wD,
\] 
together with the residual action of the quotient torus $\wR :=\KC/\wD$. It is a projective variety with at worst orbifold singularities, naturally embedded in the weighted projective space $\P_w(\C^{\str}):=( \C^{\str}-\{0\})/ \wD$ with the weights
\[
\left( \ w_{\lambda}:=w_{\lambda_1} + \cdots + w_{\lambda_d} + a \ \right)_{\lambda \in \str}.
\]

In Section \ref{section-basics}, we study the analogue of the usual \emph{Schubert cell (Bruhat) decomposition} for $\wGr$ (Proposition \ref{quasi-cell decomp}) and then by the standard argument we show our first result (more precise versions of the claims will be in the main body of the paper). \emph{In this paper, all cohomologies are assumed to be the singular cohomologies over $\Q$-coefficients unless otherwise specified.} 

\vspace{0.1in}
\noindent {\bf Proposition A} (Proposition \ref{vanishing of odd}, \ref{prop action 500}). The cohomology $H^*(\wGr)$ is concentrated in even degree. As a consequence, the equivariant cohomology $H_{\wR}^*(\wGr)$ is a free module over $H^*(B\wR)$.

\vspace{0.1in}
In Section \ref{section-picture}, we explain the following key isomorphisms among the equivariant cohomology rings of $\Gr$, $\aGrt$ and $\wGr$. The claim follows essentially from the \emph{Vietoris-Begle mapping theorem}. 

\vspace{0.1in}
\noindent {\bf Proposition B} (Proposition \ref{key prop}). The pullback maps on the equivariant cohomologies
\[
\xymatrix{
H_{\trR}^*(\Gr) \ar[r]^{{\hmap}^*} & H_{\KC}^*(\aGrt)  &H_{\wR}^*(\wGr) \ar[l]_{\ \ \ {\whmap}^*}
}
\]
are isomorphisms of rings over $H^*(B\trR)$ and $H^*(B\wR)$ respectively.
\vspace{0.1in}

\noindent Having these isomorphisms, we introduce the equivariant Schubert classes $\aS_{\lambda}$ of $\aGrt$ to be the image of the usual equivariant Schubert class $\tS_{\lambda}$ in $H_{\trR}^*(\Gr)$ under the pullback $\pi^*$ and  define the \emph{equivariant weighted Schubert classes} $\wS_{\lambda}$ of $\wGr$ by
\[
\wS_{\lambda}:=(\whmap^*)^{-1}(\aS_{\lambda}).
\]
The corresponding ordinary cohomology classes for $\Gr$ and $\wGr$ are denoted by $S_{\lambda}$ and $\w S_{\lambda}$ respectively.

\vspace{0.1in}
In Section \ref{section-GKM}, we obtain the \textit{GKM (Goresky-Kottwitz-Macpherson)} descriptions of $H_{\KC}^*(\aGrt)$ and $H^*_{\wR}(\wGr)$, following \cite{CS}, \cite{GKM} and \cite{HM}. We observe that there is the following commutative diagram of injective localization maps
\begin{equation*}
\xymatrix{
H_{\trR}^*(\Gr) \ar[rr]\ar[d]_{\hmap^*}^{\cong}            && \bigoplus_{\lambda} H^*(B\trR)\ar[d]_{\cong}\\
H_T^*(\aGrt) \ar[rr]                                    && \bigoplus_{\lambda} H^*(B\KC_{\lambda})\\
H_{\wR}^*(\wGr) \ar[rr]\ar[u]^{\whmap^*}_{\cong} && \bigoplus_{\lambda} H^*(B\wR) \ar[u]^{\cong}
}
\end{equation*}
where $\KC_{\lambda}$ is the kernel of $\KC \to \C^{\times};\  t \mapsto t_{\lambda}$. The GKM descriptions of $H_{\KC}^*(\aGrt)$ and $H_{\wR}^*(\wGr)$ are obtained in Proposition \ref{GKM} and \ref{aGr GKM}, from the well-known one for $H_{\trR}^*(\Gr)$ by the commutative diagram. Furthermore, the upper triangularity of the image of $\aS_{\lambda}$ and $\wS_{\lambda}$ is given in Proposition \ref{upper triangularity aS} and \ref{can prop 50}, and as a consequence, we have

\vspace{0.1in}
\noindent{\bf Proposition C} (Proposition \ref{can prop 120}). \!$\{\wS_{\lambda}\}_{\lambda}$ is a basis of $H_{\wR}^*(\wGr)$ as a module over $H^*(B\wR)$.

\vspace{0.1in}
This allows us to define the structure constants $\wc_{\lambda\mu}^{\nu}$ of $H_{\wR}^*(\wGr)$ by
\[
\wS_{\lambda} \cdot \wS_{\mu} = \sum_{\nu} \wc_{\lambda\mu}^{\nu} \wS_{\nu} \ \ \ \mbox{where}\  \wc_{\lambda\mu}^{\nu} \in H^*(B\wR).
\]
In Section \ref{section-final}, we derive the formula for $\wc_{\lambda\mu}^{\nu}$ and prove the equivariant positivity. Let $\{y_1,\dots, y_n, z\}$ be the standard basis of $\Lie(\KC)_{\Z}^*$ and identify $H^*(B\KC)$ with $\Q[y_1,\cdots, y_n,z]$. Since $\wR$ is a quotient of $\KC$, we can regard $\Lie(\wR)_{\Z}^*$ as a subspace of $\Lie(\KC)_{\Z}^*$.  For each pair $\alpha=(i,j)$ of integers in $[n]$ with $i>j$, let
\[
 u_{\alpha}:=y_i-y_j \in \Q[\trR^*] \ \ \ \  \ \ \mbox{ and }\ \ \ \ \ \ \wu_{\alpha}:=(\wy_i-\wy_j)-\frac{w_i-w_j}{w_{\id}}\wy_{\id} \in \Q[\wR^{\!\!*}]
\]
where $\id \in\str$ is the unique minimum element in the Bruhat order and $y_{\lambda}:=y_{\lambda_1}+\cdots + y_{\lambda_d}$.  It is easy to see that $\{\wu_{(i+1,i)}, i=1,\cdots,n-1\}$ is a linearly independent subset of $\Lie(\wR)_{\Q}^*$. For simplicity, we let $u_i:=u_{(i+1,i)}$ and $\wu_i:=\wu_{(i+1,i)}$. For each finite collection $I=\{\alpha_1,\cdots,\alpha_p\}$ of pairs of integers in $[n]$ as above, let
\begin{align*}
u_I:=u_{\alpha_1}\cdots u_{\alpha_p} \ \ \ \mbox{and} \ \ \  \wu_{I}^{(r)}
=\sum_{1\leq s_1<\cdots<s_r\leq p}  \frac{w(\alpha_{s_1})}{w_{\id}}\cdots \frac{w(\alpha_{s_r})}{w_{\id}} \frac{\wu_{\alpha_1}\cdots\wu_{\alpha_p}}{\wu_{\alpha_{s_1}}\cdots\wu_{\alpha_{s_r}}}.
\end{align*}
where  $w(\alpha):=w_i-w_j  \in \Z$ if $\alpha=(i,j)$. Here note that $\wu_{I}^{(0)}=\wu_{\alpha_1}\cdots\wu_{\alpha_p}$. 


We first introduce $K_{1^r \eta}^{\nu} \in \Q[\wR^{\!\!*}]$ as the coefficient for the following product.
\begin{align*}
(\aS_{\di})^{r} \aS_{\eta}
= \sum_{\nu} K_{1^r \eta}^{\nu} \aS_{\nu}.
\end{align*}
The explicit formula for $K_{1^r \eta}^{\nu}$ is given in Lemma \ref{lem:K1eta}.  To obtain the formula for $\wc_{\lambda\mu}^{\nu}$, we use the well-known fact that the equivariant Schubert structure constant $\tilde{c}_{\lambda\mu}^{\nu}$ for $H_{\trR}^*(\Gr)$ is an element of $\Z[u_1,\cdots,u_{n-1}]$ 
\begin{align}
 \tilde{c}_{\lambda\mu}^{\nu}=\sum_{|I| = l(\lambda) + l(\mu) -l(\nu) } c(\lambda,\mu,\nu;I) u_{I}, \ \  c(\lambda,\mu,\nu;I) \in \Z_{\geq 0}
\end{align}
where $I$ runs over collections of pairs $(i,j)$ of integers in $[n]$ with $i>j$ as above. For example, Knutson-Tao \cite{KT} computed the number $c(\lambda,\mu,\nu;I)$ in terms of the equivariant puzzles.

The following is our main theorem.

\vspace{0.1in}
\noindent{\bf Theorem E} (Theorem \ref{main theorem}, \ref{equivariant positivity}).\ 
Let $\lambda,\mu,\nu\in\str $, then
\begin{align}
\wc_{\lambda\mu}^{\nu}=\sum_{\nu\geq\eta\geq\lambda,\mu}\sum_{I}\sum_{r=0}^{|I|} c(\lambda,\mu,\eta;I) K_{1^r \eta}^{\nu} \wu_{I}^{(r)}.
\end{align}
Moreover, if $w_1\leq w_2\leq\cdots\leq w_n$, then $\wc_{\lambda\mu}^{\nu}$ is a polynomial in $\wu_1,\cdots,\wu_{n-1}$ with non-negative coefficients. 
\vspace{0.1in}

It is worth noting that a permutation of the index $(1, \cdots, n)$ can change the order of  the weights $\{w_1,\cdots,w_n\}$ into a non-decreasing order without changing the space $\wGr$ up to isomorphisms. Therefore we can always find a Schubert basis that satisfies the positivity.

The ordinary structure constants $\w c_{\lambda\mu}^{\nu}$ are given by the non-equivariant limit $\wu_1= \cdots = \wu_{n-1} = 0$. Thus we obtain the formula for $\w c_{\lambda\mu}^{\nu}$ too, particularly in terms of a specielization of the equivariant structure constants $\tilde{c}_{\lambda\mu}^{\nu}$ for $\Gr$ computed in \cite{KT}:
 
\vspace{0.1in}
\noindent{\bf Corollary G} (Corollary \ref{cor of main} below) 
Let $\lambda,\mu,\nu\in\str$.  The structure constant $\w c_{\lambda\mu}^{\nu}$ is given by
\begin{align*}
\w c_{\lambda\mu}^{\nu}  = \sum_{\nu\geq\eta\geq\lambda,\mu}\sum_{\substack{\nu = \nu^0 \rightarrow \\ \cdots\rightarrow \nu^l = \eta}}\frac{ \tilde{c}_{\lambda\mu}^{\eta}(u_i=w_{i+1}-w_i, i=1,\cdots,n-1)  }{w_{\nu^1} \cdots w_{\nu^l}}
\end{align*}
if $l(\lambda)+l(\mu) =l(\nu)$ and $\w c_{\lambda\mu}^{\nu}=0$ if otherwise. Furthermore, if $w_1\leq w_2\leq \cdots\leq w_n$, $ \w c_{\lambda\mu}^{\nu}$ is non-negative. 
\vspace{0.1in}


We conclude Section \ref{section-final} by including the examples of $\mathrm{wGr}(1,n)$ and $\mathrm{wGr}(2,4)$. The space $\mathrm{wGr}(1,n)$ is the well-known \emph{weighted Projective space} and its integral cohomology are first studied by Kawasaki \cite{Kawasaki} and its equivariant cohomology by Bahri-Franz-Ray\cite{BFR} and Tymoczko \cite{Tym}. We discuss the relation of our Schubert basis to Kawasaki's basis over $\Z$-coefficients at Example \ref{kawasakibasis}. 


\section{Weighted Grassmannians and Schubert cell decomposition}\label{section-basics}
In this section, we recall the definition of the \emph{weighted Grassmannian} $\wGr$, following \cite{CR}.  We study the coordinate charts and obtain a \emph{quasi-cell decomposition} which generalizes the usual Schubert cell decomposition of the ordinary Grassmannian $\Gr$. This allows us to show that  the odd degree classes of the rational cohomology of $\wGr$ vanish and also the equivariant cohomology is a free module over a polynomial ring.

For positive integers $d$ and $n$ such that $d <n$, let $[n]:=\{1,\cdots, n\}$, and 
\[
\str :=\{\lambda \subset [n]\ | \ |\lambda|=d\}.
\]
We denote the elements of $\lambda$ by $\lambda_1,\cdots, \lambda_d$ where $\lambda_1 < \cdots < \lambda_d$. For $\lambda,\mu\in\str$, we define the \textit{Bruhat order} by
\begin{equation}\label{Bruhatorder}
 \text{$\lambda\geq\mu$ \ \ \ if\ \ \  $\lambda_i\leq \mu_i$ for all $i=1,\cdots,d$}.
\end{equation}
An \textit{inversion} $(k,l)$ of $\lambda$ is a pair of $k \in \lambda$ and $l\not\in\lambda$ such that $k< l$. Let $\invv(\lambda)$ be the set of all inversions of $\lambda$. The \emph{length} $l(\lambda)$ of $\lambda$ is defined to be the cardinarity of $\invv(\lambda)$. For each $(k,l)\in \invv(\lambda)$, let $(k,l)\lambda$ be the element of $\str$ obtained by replacing $k$ in $\lambda$ by $l$. 
We say that $\lambda$ \emph{covers} $\mu$ if $\lambda\geq\mu$ and $l(\lambda)=l(\mu)+1$, and denote $\lambda \to \mu$. 
\begin{remark}\label{dictionary for KT}
For each $\lambda\in\str$, we can consider the sequence of ones and zeros such that one for each $\lambda_i$-th position ($i=1,\cdots,d$) and zeros for the other positions. Obviously this is a bijection, and we can identify $\str$ and the set of sequences of $d$ zeros and $n-d$ ones by this rule. This will help us to compare our notation and the notations in \cite{KT}.
\end{remark}
\subsection{The weighted Grassmannian} \label{subsection-def of wGr}
Let $\C^n$ be the complex $n$-plane and $\wedge^d \mathbb{C}^n$ its $d$-th exterior product. The standard representation of $\text{GL}_{n}(\C)$ on $\C^n$ canonically induces the representation of  $\text{GL}_{n}(\C)$ on $\wedge^d \C^n$ and hence a linear $\text{GL}_{n}(\C)$-action on $\P(\wedge^d \C^n)$. Through the Pl\"{u}cker embedding, the Grassmannian $\Gr$ of $d$-dimensional subspaces in $\C^n$ can be indentified with the $\text{GL}_{n}(\C)$-orbit  $\text{GL}_{n}(\C) \cdot [e_1\wedge\cdots\wedge e_d]$ in $\P(\wedge^d \C^n)$. Consider the affine cone of $\Gr$ in $\wedge^d \C^n$
\[
\aGr := \text{GL}_{n}(\C) \cdot (e_1\wedge\cdots\wedge e_d).
\]
Let $\trR:=(\C^{\times})^n$ and identify it with the diagonal matrices of $\text{GL}_{n}(\C)$. The quasi-affine variety $\aGrt := \aGr -\{0\}$ is preserved under the action of $\KC:=\trR\times \C^{\times}$  on $\wedge^d \C^n$ where the first factor acts through the $\text{GL}_{n}(\C)$-action and the second factor by the scalar multiplication.
\begin{definition}[Corti-Reid \cite{CR}]  Let $w:=(w_1,\cdots,w_n) \in (\Z_{\geq0})^n$ and $\twt \in\Z_{\geq1}$. Consider the subgroup of $\KC$ and the corresponding quotient group:
\begin{align*}
\wD := \{(t^{w_1},\cdots,t^{w_n},t^{\twt}) \in \KC  \mid t\in \C^{\times} \} \ \ \ \ \ \ \mbox{ and } \ \ \  \ \ \ \wR:= \KC /\wD.
\end{align*}
The \textit{weighted Grassmannian} $\wGr$ is the projective variety with at worst orbifold singularities, defined by
\begin{align*}
\wGr := \aGrt/\wD.
\end{align*}
The quotient map $\whmap:\aGrt\rightarrow\wGr$ is equivariant with respect to the quotient homomorphism $\wkmap : \KC \rightarrow \wR$. 
\end{definition}
When $w=(0,\cdots,0)$ and $\twt=1$, $\wGr$ becomes the usual Grassmannian $\Gr$ and $\wR$ is canonically identified with $\trR$. In this case, we denote the above quotient maps by $\kmap : \KC\rightarrow \trR$ and $\hmap:\aGrt\rightarrow \Gr$ respectively.
\subsection{The Charts for $\aGrt $ and $\wGr $} \label{aff chart}
The standard basis $\{e_i, i \in [n]\}$ of $\C^n$ induces the standard basis $\{ e_{\lambda} := e_{\lambda_1} \wedge \cdots \wedge e_{\lambda_d}, \lambda\in\str\}$ of $\wedge^d\C^n$. We identify $\bigwedge^d \C^n$ with a coordinate space $\C^{\str}$ where a vector $x=\sum x_{\lambda} e_{\lambda}$ corresponds to the coordinate vector $x=(x_{\lambda})_{\lambda\in\str}$. For each $(t,s) \in \KC=\trR\times \C^{\times}$, its action on $e_{\lambda}$ is given by 
\begin{align}\label{definition for t_lambda}
(t,s)\cdot e_{\lambda} = (st_{\lambda}) e_{\lambda} \quad \text{where} \quad t_{\lambda}:=\textstyle{\prod_{l\in \lambda}} t_l.
\end{align}
Let $U^{\lambda}:=\left\{ [x] \in\Gr \mid x_{\lambda} \neq 0 \right\}$. It is well-known that $U^{\lambda}$ is $\trR$-equivariantly isomorphic to the complex affine space $\C^{d(n-d)}$ with a linear $\trR$-action. It is a $\trR$-invariant affine chart of $[e_{\lambda}]$, and these form an affine open cover of  $\Gr$. 

The quotient map $\hmap : \aGrt \rightarrow \Gr$ is  a $\kmap$-equivariant $\C^{\times}$-principal bundle. The preimage of $U_{\lambda}$ is
\begin{align*}
\af U^{\lambda}:=\left\{ x \in\aGrt \mid x_{\lambda} \neq 0 \right\}.
\end{align*}
and then a $\kmap$-equivariant trivialization  is given by
\begin{align}
 \psi_{\lambda}:\af U^{\lambda} \to U^{\lambda}\times\C^{\times};
 \quad  x \mapsto (\hmap(x), x_{\lambda})
\end{align}
where the $\KC$-action on $U^{\lambda}\times\C^{\times}$ is defined by $(t,s) \cdot ([x],y) := ([tx],st_{\lambda}y)$.  Indeed, the inverse map is given by
\begin{align}
 U^{\lambda}\times\C^{\times} \to \af U^{\lambda};  
 \quad  ([x],t) \mapsto (tx_{\lambda}^{-1}x_{\eta})_{\eta\in\str}.
\end{align}
This $\af U^{\lambda}$ plays a role of  a $\KC$-invariant chart of $e_{\lambda}$ in $\aGrt$.

The quotient $\w U^{\lambda}:=\af U^{\lambda}/\wD$ gives us a $\wR$-equivariant open neighborhood of $[e_{\lambda}]\in\wGr$  and $\psi_{\lambda}$ induces an equivariant isomorphism
\begin{align*}
 \overline{\psi}_{\lambda} : \w U^{\lambda}  \stackrel{\cong}{\longrightarrow} (U^{\lambda} \times \Ctimes )/\wD.
\end{align*}
Let 
\begin{align}\label{definition for w_lambda}
w_{\lambda}:= a + \sum_{i=1}^d w_{\lambda_i} \quad \text{ for each $\lambda\in\str$}.
\end{align}
Then the finite cyclic subgroup of $\wD$
\[
G_{\lambda}  =  \left\{ (t^{w_1},\cdots,t^{w_n},t^{\twt}) \in \wD \mid t\in \C^{\times} \ \mbox{ and } \ t^{w_{\lambda}}=1 \right\}.
\]
acts on the second factor of $U^{\lambda} \times \Ctimes$ trivially. Hence the image of the isomorphism $\overline{\psi}_{\lambda}$ is equivariantly homeomorphic to $U^{\lambda}/G_{\lambda}$. 
\subsection{The Schubert cell decompositions}\label{subsection-decomp}
For each $\lambda\in\str$, we have the Schubert cell $\Omega^{\circ}_{\lambda}$ and the Schubert variety $\Omega_{\lambda}$ in $\Gr$ (See \cite{Ful} or \cite{KT}). Define
\begin{align*}
 \af\Omega^{\circ}_{\lambda}:=\hmap^{-1}(\Omega^{\circ}_{\lambda})
 \quad \text{and} \quad
 \af\Omega_{\lambda} := \hmap^{-1}(\Omega_{\lambda}).
\end{align*}
Under the chart $\psi_{\lambda}$, we have $\af \Omega^{\circ}_{\lambda} \cong \Ctimes\times\Omega^{\circ}_{\lambda}$ and so its complex codimension in $\aGrt$ is the length $l(\lambda)$. The irreducibility of $\af\Omega_{\lambda}$ follows from the irreducibility of $\Omega_{\lambda}$ and the fiber $\C^{\times}$ of the bundle $\hmap$. Therefore the closure of the open subset $\af\Omega^{\circ}_{\lambda}\subset\af\Omega_{\lambda}$ coincides with $\af\Omega_{\lambda}$. The usual Schubert cells decomposition $\Gr  = \textstyle{\coprod}_{\lambda \in \str} \Omega^{\circ}_{\lambda}$ induces  the $\KC$-invariant decomposition
\begin{align}\label{q-cell 2100}
 \aGrt  = \textstyle{\coprod}_{\lambda \in \str} \af \Omega^{\circ}_{\lambda}.
\end{align}
By the $\KC $-invariancy, it descends to the quotient $\wGr$ and gives the $\wR$-invariant decomposition:
\begin{proposition}\label{quasi-cell decomp}
\[
\wGr = \textstyle{\coprod}_{\lambda \in \str} \w \Omega^{\circ}_{\lambda}\ \ \ \mbox{ where }\ \ \  \w\Omega^{\circ}_{\lambda}:= \af \Omega^{\circ}_{\lambda}/\wD.
\] 
Under the chart $\overline{\psi}_{\lambda}$, $\w \Omega^{\circ}_{\lambda} \cong \Omega^{\circ}_{\lambda}/G_{\lambda}$.
\end{proposition}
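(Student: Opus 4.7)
The plan is to derive both claims directly from the $\KC$-invariant decomposition \eqref{q-cell 2100} together with the chart trivialization $\psi_\lambda$ constructed in Section \ref{aff chart}. Since $\wD\subset\KC$, the decomposition $\aGrt=\coprod_\lambda \af\Omega^\circ_\lambda$ is automatically $\wD$-invariant, and each stratum is $\wD$-saturated. Applying the surjection $\whmap$ then yields the disjoint decomposition $\wGr=\coprod_\lambda \w\Omega^\circ_\lambda$: surjectivity of $\whmap$ guarantees that the strata exhaust $\wGr$, while the $\wD$-saturation guarantees that images of distinct $\af\Omega^\circ_\lambda$ remain disjoint in the quotient.

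For the chart description, I would first observe that $\af\Omega^\circ_\lambda\subset\af U^\lambda$ because the Pl\"ucker coordinate $x_\lambda$ is nonzero on $\Omega^\circ_\lambda$. Under the equivariant isomorphism $\psi_\lambda$ of Section \ref{aff chart}, the subset $\af\Omega^\circ_\lambda$ is mapped to $\Omega^\circ_\lambda\times\Ctimes$, and using \eqref{definition for t_lambda} and \eqref{definition for w_lambda} the $\wD$-action on the second factor becomes $(t^{w_1},\dots,t^{w_n},t^{\twt})\cdot y = t^{w_\lambda}y$. I would then restrict to the slice $\{y=1\}$: since $w_\lambda\geq \twt\geq 1$, the map $t\mapsto t^{w_\lambda}$ is surjective on $\Ctimes$, so the $\wD$-action on the $\Ctimes$-factor is transitive and every orbit meets the slice. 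The pointwise stabilizer of $y=1$ in $\wD$ is precisely the finite cyclic group $G_\lambda$, and the residual $G_\lambda$-action on $\Omega^\circ_\lambda$ is the restriction of the original $\KC$-action. This produces the desired homeomorphism $\w\Omega^\circ_\lambda \cong \Omega^\circ_\lambda/G_\lambda$.

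The only point requiring care is that the passage to the slice is a genuine equivalence of quotients, i.e., that two points $([x_1],1),([x_2],1)\in\Omega^\circ_\lambda\times\Ctimes$ lie in the same $\wD$-orbit if and only if they differ by an element of $G_\lambda$; this is immediate once $G_\lambda$ is identified with the stabilizer of $y=1$. The argument is otherwise a routine bookkeeping through the trivialization $\psi_\lambda$ and carries no essential obstacle.
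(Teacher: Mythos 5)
Your proposal is correct and follows essentially the same route as the paper: the decomposition descends because each $\af\Omega^{\circ}_{\lambda}$ is $\KC$-invariant (hence $\wD$-saturated), and the chart description comes from transporting the $\wD$-action through $\psi_{\lambda}$ to $\Omega^{\circ}_{\lambda}\times\Ctimes$, where it acts on the second factor by $y\mapsto t^{w_{\lambda}}y$ with kernel $G_{\lambda}$. Your slice-at-$y=1$ formulation is just a more explicit phrasing of the paper's observation that $G_{\lambda}$ acts trivially on the $\Ctimes$-factor, so the quotient collapses to $\Omega^{\circ}_{\lambda}/G_{\lambda}$.
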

We call this decomposition a \emph{quasi-cell decomposition} because each ``cell" is homeomorphic to an Euclidean space modulo a finite group.

\subsection{Vanishing of the odd degree}\label{vanishing odd}
The argument of Appendix B in \cite{Ful} can be applied to the quasi-cell decomposition, and we obtain
\begin{align}\label{prop coh 300}
\bmH_{i}(\wGr) \cong \bigoplus_{2\dim\w\Omega^{\circ}_{\lambda}=i} \bmH_{i}(\w\Omega_{\lambda}^{\circ})
\end{align}
where $\bmH_*$ is the rational Borel-Moore homology.  We have
\[
\bmH_{i}(\w\Omega_{\lambda}^{\circ})= \bmH_i(\Omega_{\lambda}^{\circ}/G_{\lambda})  \cong H^{2\dim\Omega^{\circ}_{\lambda} -i}(\Omega_{\lambda}^{\circ}/G_{\lambda})\cong H^{2\dim\Omega^{\circ}_{\lambda} -i}(\Omega_{\lambda}^{\circ})^{G_{\lambda}}
\]
where the second equality follows from the rational Poincar$\acute{\text{e}}$ duality (c.f. \cite{CLS} Prop 13.A.4, Appendix, Chap. 13) and the third equality is well-known (see \cite{BBFMP}). Since the $G_{\lambda}$ acts on $\Omega_{\lambda}^{\circ}$ through the action of a connected group, $G_{\lambda}$ acts on $H^*(\Omega_{\lambda}^{\circ})$ trivially. Therefore after we apply the Poincar$\acute{\text{e}}$ duality again, we obtain
\begin{proposition}
\label{vanishing of odd}
\begin{align*}
 H^i(\wGr)  \cong
 \begin{cases}
  \bigoplus_{2l(\lambda)=i}\Q &\text{if $i$ is  even}, \\
  0 & \text{if $i$ is odd}.
\end{cases}
\end{align*}
\end{proposition}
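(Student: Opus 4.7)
The plan is to combine the quasi-cell decomposition of Proposition \ref{quasi-cell decomp} with rational Poincar\'e duality for orbifolds, closely following the classical template for $\Gr$ via Borel--Moore homology.

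First, I would run the inductive Borel--Moore long-exact-sequence argument from Appendix B of \cite{Ful} on the stratification $\wGr = \coprod_{\lambda \in \str} \w\Omega_\lambda^\circ$. Provided each $\bmH_*(\w\Omega_\lambda^\circ)$ is concentrated in the single degree $2\dim_{\C}\w\Omega_\lambda^\circ = 2(d(n-d) - l(\lambda))$, this produces
\[
\bmH_i(\wGr) \;\cong\; \bigoplus_{\lambda\,:\,2(d(n-d)-l(\lambda))=i} \bmH_i(\w\Omega_\lambda^\circ),
\]
and no odd-degree Borel--Moore homology appears. The concentration claim is verified as follows: by Proposition \ref{quasi-cell decomp}, $\w\Omega_\lambda^\circ \cong \Omega_\lambda^\circ/G_\lambda$ with $\Omega_\lambda^\circ$ a complex affine space and $G_\lambda$ finite, so rational Poincar\'e duality for the quotient (cf.\ \cite{CLS}, Prop.\ 13.A.4) together with the identification $H^*(\Omega_\lambda^\circ/G_\lambda) \cong H^*(\Omega_\lambda^\circ)^{G_\lambda}$ (\cite{BBFMP}) reduces the task to computing $H^*(\Omega_\lambda^\circ)^{G_\lambda}$. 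Since $G_\lambda \subset \wD$ acts on $\Omega_\lambda^\circ$ through the connected group $\trR$ (the $\C^\times$ factor of $\KC$ acting trivially on $\Gr$), its action on cohomology is trivial, and the contractibility of $\Omega_\lambda^\circ$ forces $\bmH_i(\w\Omega_\lambda^\circ) \cong \Q$ in the expected single degree.

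To finish, I would dualize once more: because $\wGr$ has only finite-quotient (orbifold) singularities it is a rational homology manifold of complex dimension $d(n-d)$, so rational Poincar\'e duality gives $H^j(\wGr) \cong \bmH_{2d(n-d)-j}(\wGr)$. Setting $j = 2l(\lambda)$ converts the indexing by cell dimension into indexing by length and yields the stated formula; odd-degree vanishing is automatic since every contribution sits in some degree $2l(\lambda)$, which is even.

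The main delicate point is the repeated reliance on rational Poincar\'e duality and on the equality between rational cohomology of a quotient and invariants under the acting finite group. Both are available precisely because all singularities in sight are finite-quotient and we work over $\Q$; the corresponding integral statement fails in general, which is why the proposition is restricted to rational coefficients.
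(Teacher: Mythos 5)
Your proposal is correct and follows essentially the same route as the paper: the Borel--Moore long exact sequence applied to the quasi-cell decomposition, rational Poincar\'e duality plus the finite-group-invariants identification on each piece $\Omega_\lambda^\circ/G_\lambda$ with the $G_\lambda$-action on cohomology trivial by connectedness, and a final application of rational Poincar\'e duality to $\wGr$. No substantive differences to report.
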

Recall that the equivariant cohomology for the $\wR$-action on $\wGr$ is defined as the cohomology of the \emph{Borel construction}, i.e. the total space of the fibration
\begin{align*}
 \wGr \stackrel{\zeta}{\hookrightarrow} E\wR\times_{\wR}\wGr  \to B\wR,
\end{align*}
where $E\wR \to B\wR$ is a universal principal $\wR$-bundle with a contractible total space and $E\wR\times_{\wR}\wGr := (E\wR\times\wGr)/\wR$. The pullback along the projection makes $H_{\wR}^*(\wGr)$ an $H^*(B\wR)$-module. Since the fiber is path-connected, the vanishing of odd degree classes implies that the associated Serre spectral sequence collapses at $E_2$-stage. Thus we have
\begin{proposition} \label{prop action 500}
As $H^*(B\wR)$-modules,
\begin{align*}
H_{\wR}^*(\wGr) \cong H^*(B\wR) \otimes_{\Q} H^*(\wGr).
\end{align*}
In particular, $H_{\wR}^*(\wGr)$ is a free module over $H^*(B\wR)$.
\end{proposition}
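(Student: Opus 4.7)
The plan is to apply the Leray--Serre spectral sequence to the Borel fibration $\wGr \stackrel{\zeta}{\hookrightarrow} E\wR \times_{\wR} \wGr \to B\wR$ and exploit a parity constraint coming from Proposition \ref{vanishing of odd}. Since $\wR$ is a (compact) torus, $B\wR$ is simply connected, so the local coefficient system on the base is trivial and the $E_2$-page reads
\[
E_2^{p,q} \;=\; H^p(B\wR) \otimes_{\Q} H^q(\wGr).
\]
Also, $H^*(B\wR)$ is a polynomial ring on degree-two generators, hence concentrated in even degrees. Combining this with Proposition \ref{vanishing of odd}, the term $E_2^{p,q}$ vanishes unless both $p$ and $q$ are even.

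The key step is then a parity argument killing every differential. For $r \geq 2$, the differential $d_r \colon E_r^{p,q} \to E_r^{p+r,\, q-r+1}$ has a possibly nonzero source only if $p$ and $q$ are both even, while its target is nonzero only if $p+r$ and $q-r+1$ are both even. These conditions force $r$ to be simultaneously even and odd, which is impossible. Hence $d_r = 0$ for all $r \geq 2$, the spectral sequence collapses at $E_2$, and $E_\infty^{p,q} = H^p(B\wR) \otimes_{\Q} H^q(\wGr)$.

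Finally, I would upgrade this associated-graded identification to a genuine isomorphism of $H^*(B\wR)$-modules. Choose a $\Q$-basis $\{b_i\}$ of $H^*(\wGr)$; because the edge homomorphism $\zeta^* \colon H_{\wR}^*(\wGr) \to H^*(\wGr)$ is surjective (that edge map becomes $E_\infty^{0,*} \hookrightarrow E_2^{0,*} = H^*(\wGr)$, and the inclusion is now the identity by collapse), lift each $b_i$ to $\widetilde{b}_i \in H_{\wR}^*(\wGr)$. The resulting $H^*(B\wR)$-linear map
\[
H^*(B\wR) \otimes_{\Q} H^*(\wGr) \;\longrightarrow\; H_{\wR}^*(\wGr), \qquad \alpha \otimes b_i \;\longmapsto\; \alpha \cdot \widetilde{b}_i
\]
respects the Serre filtration and induces an isomorphism on associated gradeds, hence is itself an isomorphism of $H^*(B\wR)$-modules. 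Freeness of $H_{\wR}^*(\wGr)$ over $H^*(B\wR)$ is then immediate, since $H^*(\wGr)$ is $\Q$-free.

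The only real content is the parity collapse; the main potential obstacle I would watch for is the lift-to-module-isomorphism step, but this is standard once one notes that the Serre filtration is bounded in each total degree and is preserved by multiplication by the base ring $H^*(B\wR)$.
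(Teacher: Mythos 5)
Your proposal is correct and follows essentially the same route as the paper: the paper likewise observes that the Serre spectral sequence of the Borel fibration collapses at the $E_2$-page because the fiber has cohomology only in even degrees (Proposition \ref{vanishing of odd}) and the base $B\wR$ does as well. Your additional details (the parity argument on the differentials and the explicit lift upgrading the associated-graded identification to an $H^*(B\wR)$-module isomorphism) are the standard steps the paper leaves implicit.
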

\section{Equivariant Weighted Schubert Classes}\label{section-picture}
In this section, we observe that the rational equivariant cohomologies $H_{\KC}^*(\aGrt)$, $H_{\wR}^*(\wGr)$, and $H^*_{\trR}(\Gr)$ are all isomorphic as rings, while they are modules over difference polynomial rings. We define the \emph{equivariant weighted Schubert classes} $\wS_{\lambda}$ in $H_{\wR}^*(\wGr)$ using these ring isomorphisms.

The quotient maps from $\aGrt$ to $\wGr$ and $\Gr$, and from $\KC$ to $\wR$ and $\trR$,  induce the following commutative diagram of the Borel constructions:
\[
\xymatrix{
E\trR\times_{\trR} \Gr\ar[d] && E\KC\times_\KC \aGrt \ar[ll]_{\hmap}\ar[rr]^{\whmap} \ar[d] && E\wR \times_{\wR} \wGr\ar[d] \\
B\trR && B\KC\ar[rr]^{\wkmap}\ar[ll]_{\kmap} && B\wR
}
\]
By the functoriality, the pullback maps on cohomologies
\[
\hmap^*: H_{\trR}^*(\Gr) \to H_{\KC}^*(\aGrt) \ \ \ \mbox{ and } \ \ \ \whmap^*:H_{\wR}^*(\wGr) \to H_{\KC}^*(\aGrt)
\] 
are homomorphism of rings over the polynomial rings $H^*(B\trR)$ and $H^*(B\wR)$ respectively. The proof of the following proposition is postponed until after we define the weighted Schubert classes.
\begin{proposition}
\label{key prop}
The maps $\hmap^*$ and $\emph{$\whmap^*$}$ are isomorphisms as rings over the poylnomial rings $\emph{$H^*(B\trR)$}$ and $\emph{$H^*(B\wR)$}$ respectively.
\end{proposition}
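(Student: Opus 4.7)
The plan is to deduce both isomorphisms from the following general principle: if $1 \to H \to G \to Q \to 1$ is a short exact sequence of complex Lie groups and $X$ is a $G$-space on which the subgroup $H$ acts freely (respectively, locally freely with finite stabilizers), then the natural comparison map on Borel constructions $EG \times_G X \to EQ \times_Q (X/H)$ is an integral (respectively, rational) cohomology isomorphism. In our situation this specializes to the two cases $H = \Ctimes \subset \KC$ with $Q = \trR$ (a free action on $\aGrt$ with quotient $\Gr$) and $H = \wD \subset \KC$ with $Q = \wR$ (a locally free action on $\aGrt$ with quotient $\wGr$).

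First, I would construct the comparison map $\phi \colon EG \times_G X \to EQ \times_Q (X/H)$. Since $H$ acts freely on the contractible space $EG$, the quotient $EG/H$ serves as a model for $EQ$, and the assignment $(e,x) \mapsto ([e],[x])$ descends to a well-defined map of Borel constructions. One checks directly from the definitions that $\phi^{*}$ coincides with $\hmap^{*}$ in the first case and with $\whmap^{*}$ in the second, and that it is a map of rings over $H^{*}(BQ)$ by functoriality of the Borel construction.

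Next, I would identify the homotopy fiber of $\phi$ over a basepoint $([e_0],[x_0])$. A short computation with representatives $(e_0 h_1, x_0 h_2)$, normalizing the first coordinate using the free $G$-action on $EG$, identifies the fiber with the orbit $H\cdot x_0 \subset X$. Keeping track of the $EG$-direction, the fiber is weakly equivalent to the Borel-type space $EH \times_H (H\cdot x_0)$. In the free case $H\cdot x_0 \cong H$, so this space is homotopy equivalent to $EH$ and is contractible; hence $\phi$ is a weak equivalence and $\hmap^{*}$ is an isomorphism. In the locally free case $H\cdot x_0 \cong H/G_{x_0}$ for a finite stabilizer $G_{x_0}$, and the fiber is homotopy equivalent to $BG_{x_0}$, which is rationally acyclic.

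Finally, rational acyclicity of the fibers together with the Vietoris--Begle theorem (or equivalently the collapse of the Serre spectral sequence of $\phi$) yields that $\phi^{*}$ is an isomorphism of $\Q$-cohomology rings; naturality ensures $H^{*}(BQ)$-linearity and compatibility with products. The main obstacle will be the careful identification of the homotopy fiber in the orbifold case: I expect this to rest on the existence of local slices for the locally free $\wD$-action on $\aGrt$, which can be extracted from the explicit $\KC$-invariant charts $\af U^{\lambda}$ introduced in Section~\ref{aff chart}, where the stabilizers are explicitly the finite cyclic groups $G_{\lambda}$.
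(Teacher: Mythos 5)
Your overall strategy --- compare the two Borel constructions by a map whose point fibers are rationally acyclic classifying spaces of finite groups, then invoke Vietoris--Begle --- is exactly the mechanism behind the paper's Lemma \ref{key lemma}. But two steps as written do not hold up. First, $EG/H$ is \emph{not} a model for $EQ$: since $H\cong\C^{\times}$ acts freely on the contractible space $EG$, the quotient $EG/H$ is a model for $BH\simeq\C P^{\infty}$, which is not contractible. The map $(e,x)\mapsto([e],[x])$ you construct therefore lands in $EG\times_{G}(X/H)$ (with $G$ acting through $Q$), which differs from the Borel construction $EQ\times_{Q}(X/H)$ by a $BH$-factor, and its point fibers are single $H$-orbits $H/\Gamma_{x_{0}}\cong\C^{\times}$ --- not rationally acyclic, even in the free case. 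Your subsequent identification of the fiber as $EH\times_{H}(H\cdot x_{0})\simeq B\Gamma_{x_{0}}$ is the correct one, but only for a correctly built comparison map (e.g.\ take $EG\times EQ$ with the diagonal $G$-action as the model for $EG$ and project onto the second factor); as written, the fiber computation contradicts the construction of the map it is supposed to describe.

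Second, the application of Vietoris--Begle is glossed over at exactly the point where the paper does real work. Spanier's theorem needs a \emph{closed} surjection between paracompact spaces; your $\phi$ is a map between infinite-dimensional Borel constructions of the non-compact space $\aGrt$ for the non-compact group $\wD\cong\C^{\times}$, and closedness is not apparent. Nor is $\phi$ a fibration in the orbifold case --- the fibers $B\Gamma_{x}$ jump where stabilizers grow --- so ``collapse of the Serre spectral sequence of $\phi$'' is not available as stated. The paper's proof is structured precisely to discharge these issues: it passes to the compact real tori, equivariantly deformation-retracts $\aGrt$ onto a compact level set $M$ of the moment map for the $\wDR$-action, identifies $M/\wDR\cong\wGr$, and only then applies the Vietoris--Begle-based Lemma \ref{key lemma} to a map of compact quotients. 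Your closing remark about extracting slices from the charts $\af U^{\lambda}$ is the right instinct for supplying the missing local structure, but it currently stands in for what should be the main technical content of the proof.
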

Since each $\af \Omega_{\lambda}$ is a closed $\KC$-invariant irreducible subvariety in a non-singular quasi-projective $\KC$-variety $\aGrt$, the equivariant Gysin map $H_{\KC}^*(\af \Omega_{\lambda})\rightarrow H_{\KC}^{*+2l(\lambda)}(\aGrt)$ (c.f. \cite[Appendix B]{Ful}) defines the cohomology class $[\af \Omega_{\lambda}]_{\KC}$ in  $H_{\KC}^*(\aGrt)$ associated to $\af \Omega_{\lambda}$ as the image of $1\in H_{\KC}^0(\af \Omega_{\lambda})$:
\begin{align*}
\aS_{\lambda}:=[\af \Omega_{\lambda}]_{\KC} 
\in H_{\KC}^{2l(\lambda)}(\aGrt).
\end{align*}
Since $\af\Omega_{\lambda}=\hmap^{-1}(\Omega_{\lambda})$ and $\hmap:\aGrt\rightarrow\Gr$ is an equivariant fiber bundle with respect to the quotient $\kmap:\KC\rightarrow \trR$, we actually have
\begin{align*}
\aS_{\lambda}=\hmap^*(\tS_{\lambda}) 
\end{align*}
where $\tS_{\lambda}=[\Omega_{\lambda}]_{\trR}$ is the $\trR$-equivariant Schubert class in $H_{\trR}^*(\Gr)$ (c.f. Appendix B-(8) in \cite{Ful}).
\begin{definition}
Define the \emph{$\emph{$\wR$}$-equivariant weighted Schubert class} corresponding to $\lambda$ by
\begin{align*}
 \wS_{\lambda}:=(\whmap^*)^{-1}(\aS_{\lambda}) \in H_{\wR}^{2l(\lambda)}(\wGr).
\end{align*}
This definition coincides with the usual equivariant Schubert class for the ordinary Grassmannian when the weights are trivial. We denote by $S_{\lambda}$ and $\w S_{\lambda}$ the corresponding classes in \textit{ordinary} cohomologies $H^*(\Gr)$ and $H^*(\wGr)$ respectively.
\end{definition}
\subsection*{Proof of Proposition \ref{key prop}} \label{subsection-Matsumura map}
By an elementary application of the \emph{Vietoris-Begle mapping theorem} (c.f. \cite[Thm.15, Sec.9, Chp.6]{Spa}), we have the following lemma.
\begin{lemma}
\label{key lemma}
Let $M$ be a compact manifold with a smooth action of a compact torus $\sfK$. Let $\sfG \subset \sfK$ be a subtorus that acts on $M$ with finite stabilizers. Let $\sfT := \sfK/\sfG$. Then the natural map $\theta: E\sfK\times_{\sfK} M \to E\sfT\times_{\sfT} (M/\sfG)$ induces an isomorphism of rings over $H^*(B{\sfT})$ on the rational equivariant cohomology
\[
\theta^*: H_{\sfT}^*(M/\sfG) \to H_{\sfK}^*(M).
\]
\end{lemma}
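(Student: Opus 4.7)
The plan is to prove the lemma by Vietoris--Begle applied to a convenient intermediate space, exploiting the fact that finite stabilizers make the resulting fibers rationally acyclic. The main technical ingredient is the slice theorem for $\sfG$-actions with finite stabilizers: near an orbit $\sfG \cdot m_0$ the $\sfG$-space $M$ is equivariantly of the form $\sfG \times_{\sfG_{m_0}} V$ for a slice $V$, which provides local triviality for all of the maps below.

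The first step is to introduce $Z := E\sfK \times_{\sfG} M$, the Borel construction for the restricted $\sfG$-action on $M$. Since $\sfG \subset \sfK$ acts freely on $E\sfK$, the diagonal $\sfG$-action on $E\sfK \times M$ is free regardless of the $\sfG$-action on $M$, so $Z$ is a genuine orbit space carrying a free residual $\sfT = \sfK/\sfG$-action with $Z/\sfT = E\sfK \times_{\sfK} M$. Consequently the Borel projection $E\sfT \times_{\sfT} Z \to Z/\sfT = E\sfK \times_{\sfK} M$ is a fibration with contractible fiber $E\sfT$, hence a weak homotopy equivalence.

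Next I analyze the $\sfT$-equivariant projection $p : Z \to M/\sfG$, $[e,m]_{\sfG} \mapsto [m]_{\sfG}$. A direct orbit computation shows that the fiber of $p$ over $[m_0]_{\sfG}$ is $E\sfK/\sfG_{m_0}$; since $E\sfK$ is contractible and $\sfG_{m_0}$ is finite, each fiber has the rational cohomology of a point, and the slice theorem makes $p$ locally a fiber bundle with this fiber, so the Leray sheaves $R^{q}p_{*}\Q$ vanish for $q>0$ and $p^{*}$ is a ring isomorphism on rational cohomology. Comparing the Serre fibrations $Z \to E\sfT \times_{\sfT} Z \to B\sfT$ and $M/\sfG \to E\sfT \times_{\sfT}(M/\sfG) \to B\sfT$ over $B\sfT$, whose map on fibers is $p$, promotes this to a ring isomorphism $H^{*}_{\sfT}(M/\sfG) \xrightarrow{\cong} H^{*}(E\sfT \times_{\sfT} Z;\Q)$ over $H^{*}(B\sfT)$ by the spectral sequence comparison theorem.

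Composing with the inverse of the weak equivalence from the first step yields the sought ring isomorphism
\[
H^{*}_{\sfT}(M/\sfG) \xrightarrow{\cong} H^{*}(E\sfT \times_{\sfT} Z;\Q) \xleftarrow{\cong} H^{*}_{\sfK}(M),
\]
and a short diagram/homotopy argument identifies the composite with $\theta^{*}$ using that both $\theta$ and the intermediate comparisons lie over $B\sfT$. The hardest part will be making the Leray vanishing precise via the slice theorem; once that local triviality is in hand, the remaining steps are essentially formal consequences of the Serre spectral sequence comparison and the freeness of the residual $\sfT$-action on $Z$.
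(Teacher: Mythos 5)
Your argument is a fleshed-out version of exactly what the paper does: the paper offers no proof beyond citing the Vietoris--Begle mapping theorem, and your reduction to the rationally acyclic fibers $E\sfK/\sfG_{m_0}$ (via the intermediate Borel construction $Z=E\sfK\times_{\sfG}M$ and a spectral-sequence comparison over $B\sfT$) is the standard way to make that citation precise. One claim needs repair: $p\colon Z\to M/\sfG$ is \emph{not} locally a fiber bundle, since on a slice chart $M\cong\sfG\times_{\sfG_{m_0}}V$ the fiber $E\sfK/\sfG_{v}$ jumps as the finite stabilizer $\sfG_{v}\subset\sfG_{m_0}$ varies with $v\in V$; what is true, and suffices for the Leray/Vietoris--Begle argument, is that $p^{-1}$ of a basic open set is $E\sfK\times_{\sfG_{m_0}}V\simeq_{\Q} V/\sfG_{m_0}$, i.e.\ the preimages of a basis of neighborhoods are rationally acyclic over their images, which kills $R^{q}p_{*}\Q$ for $q>0$ and gives the isomorphism you want. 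With that correction the rest of your proof (freeness of the residual $\sfT$-action on $Z$, the contractible-fiber equivalence $E\sfT\times_{\sfT}Z\to E\sfK\times_{\sfK}M$, and the identification of the composite with $\theta^{*}$) goes through.
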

\noindent
Thus, we need to prepare only a description of $\wGr$ as the quotient of a compact space by a real torus. Let $\KR, \wDR, \trRR$ and $\wRR$ be the real tori in $\KC, \wD, \trR$ and $\wR$ respectively. Recall that we have a natural isomorphism $H_{\KC}^*(Y)\cong H_{\KR}^*(Y)$ for any $\KC$-space $Y$. Since the $\wDR$-action on $\C^{\str}$ factors through the canonical $(S^1)^{\str}$-action, it is hamiltonian with the standard moment map. Since $\aGrt$ is a $\wDR$-invariant symplectic submanifold of $\C^{\str}$, there is the induced moment map \footnote{Here we identify $\Lie(\wD) \cong \Lie(\C^{\times}) =\R$ by the map $S^1 \to \wD (t \mapsto (t^{dw_1 +a}, \cdots, t^{dw_n +a}))$.}
\begin{align*}
 \Psi : \aGrt \to \R \quad ; \quad x \mapsto -\frac{1}{2}\sum_{\lambda \in \str} d\cdot w_{\lambda}|x_{\lambda}|^2
\end{align*}
where the integer $w_{\lambda}$ is defined at (\ref{definition for w_lambda}). For a  regular value $\xi$, the preimage $M:=\Psi^{-1}(\xi)$ is a compact $\KR$-invariant submanifold of $\aGrt$. Moreover there is a $\KR$-equivariant deformation retraction from $\aGrt$ to $M$ given by the homotopy
\[
F : \aGrt \times I \to \aGrt \ ; \ (x,s) \mapsto \left((s\sqrt{\xi/\Psi(x)}+(1-s))x_{\lambda}\right)_{\lambda \in \str}.
\]
Thus, the inclusion $\iota: M\hookrightarrow\aGrt$ induces the isomorphism:
\begin{align}
\label{key prop 50}
\iota^*: H_{\KR}^*(M) \longrightarrow H_{\KC}^*(\aGrt).
\end{align}
Passing to the quotients, we obtain an equivariant map $\overline{\iota}:M/\wDR\to \wGr$ with respect to the inclusion $\wRR\hookrightarrow\wR$. This map can be shown to be a homeomorphism by a direct computation (See also \cite[Theorem 7.4]{Kirwan}). Hence, we obtain the isomorphism:
\begin{align}
\label{key prop 70}
\overline\iota^*:  H_{\wRR}^*(M/\wDR) \longrightarrow H_{\wR}^*(\wGr).
\end{align}
Let $\theta: ET \times_T M \to E\wR\times_{\wR} M/\wDR$ be a map induced by the quotient maps $M\to M/\wDR$ and $\KR\to\wRR$. Then we have the following commutative diagram.
\begin{equation}\label{complex to real}
\xymatrix{
H_{\wR}^*(\wGr)\ar[r]_{\whmap^*}\ar[d]_{\cong}^{\overline{\iota}^*} &H_{\KC}^*(\aGrt)\ar[d]_{\cong}^{\iota^*}\\
H_{\wRR}^*(M/\wDR)\ar[r]_{\theta^*} &H_{\KR}^*(M)
}
\end{equation}
Thus $\whmap^*$ is an isomorphism if $\theta^*$ is an isomorphism, which follows from Lemma \ref{key lemma}. \qed

\section{GKM Descriptions and Schubert Classes} \label{section-GKM}
In this section, we study the combinatorial presentations of $H_{\wR}^*(\wGr)$ and $H_{\KC}^*(\aGrt)$,  known as the \emph{GKM theory} developed in \cite{CS} and \cite{GKM}. This allows us, in particular, to show that the equivariant weighted Schubert classes $\wS_{\lambda}, \lambda \in\str$ form an $H^*(B\wR)$-module basis of $H_{\wR}^*(\wGr)$.

Recall that $H^*(B\KC)$ can be canonically identified with the symmetric algebra $\text{Sym}(\Lie(\KC)^*_{\Z} \otimes \Q)$ where $\Lie(\KC)^*_{\Z}$ is the space of $\Z$-linear functions on the integral lattice $\Lie(\KC)_{\Z} \subset \Lie(\KC)$. Since $\KC=(\C^{\times})^n\times\C^{\times}$ is a standard torus, we can take the standard $\Z$-basis $\{y_1,\cdots, y_n,\zz\}$ of $\Lie(\KC)^*_{\Z}$. 
Hence we let
\[
\Q[\KC^*] := H^*(B\KC) = \text{Sym}(\Lie(\KC)^*_{\Z} \otimes \Q) = \Q[y_1,\dots, y_n,\zz].
\]
Since $\wR$ is a quotient of $\KC$, we identify $\Lie(\wR)_{\Z}^*$ with its image in $\Lie(\KC)_{\Z}^*$ and it is easy to see that 
\[
\wy_i:=y_i-\frac{w_i}{a}\zz,  \quad  i=1,\cdots,n,
\]
form a basis of $\Lie(\wR)^*_{\Z} \otimes \Q$. We let
\begin{align*}
&\Q[\trR^*] := H^*(B\trR) = \text{Sym}(\Lie(\trR)^*_{\Z} \otimes \Q) = \Q[y_1,\dots, y_n] \quad \subset \Q[\KC^*], \\
&\Q[\wR^{\!\!*}] := H^*(B\wR) = \text{Sym}(\Lie(\wR)^*_{\Z} \otimes \Q) = \Q[\wy_1,\dots, \wy_n] \quad \subset \Q[\KC^*].
\end{align*}
We use the following notation in the rest of the paper:  for each $\lambda\in\str$, let
\begin{align}\label{new notations 1}
&y_{\lambda}:= \sum_{i \in \lambda} y_i \in \Q[\trR^*]   \ \ \ \ \  \mbox{ and } \ \ \ \ \ \  \  \wy_{\lambda}:= \sum_{i \in \lambda} \wy_i \in \Q[\wR^{\!\!*}].
\end{align}

The $\trR$-fixed points in $\Gr$ are the points $[e_{\mu}], \mu \in \str$ and the cohomology $H_{\trR}^*([e_{\mu}])$ is identified with $\Q[\trR^*]$. The restriction map to the fixed points
\begin{align}  \label{Gr loc}
H_{\trR}^*(\Gr) \to \bigoplus_{\mu \in \str} \Q[\trR^*]; \ \ \ \ \gamma \mapsto (\gamma|_{\mu})_{\mu \in \str}
\end{align}
is injective and the image is given by (see \cite{KT})
\begin{align} \label{Gr GKM}  
\left.\left\{\alpha = (\alpha(\mu))_{\mu} \in \bigoplus_{\mu\in\str} \Q[\trR^*] \,\right|
 \begin{matrix}
 \alpha(\lambda)-\alpha(\mu) \text{ is divisible by } y_{\lambda}-y_{\mu} \\ \text{ for any $\lambda$ and $\mu$ such that $|\lambda\cap\mu|=d-1$}
 \end{matrix}\right\}.
\end{align}
The fixed points of the $\wR$-action on $\wGr$ are again the images of $e_{\mu}$ in $\wGr$ and we also denote it by $[e_{\mu}]$. By identifying $H_{\wR}^*([e_{\mu}]) \cong \Q[\wR^{\!\!*}]$,   we have the restriction map
\begin{align}  \label{wGr loc}
H_{\wR}^*(\wGr) \longrightarrow \bigoplus_{\mu \in\str} \Q[\wR^{\!\!*}]; \ \ \ \ \gamma \mapsto (\gamma|_{\mu})_{\mu \in \str}.
\end{align}
For $\aGrt$, we restrict $H_{\KC}(\aGrt)$ to the complex $1$-dimensional orbits of $\KC $, which are given by $\Ctimes e_{\mu}$.  The isotropy subgroup $\KC_{\mu}$ at $e_{\mu}$ of the $\KC$-action is the kernel of the map $\KC \to \C^{\times}$ sending $(t_1,\cdots,t_n,s)$ to $s\cdot t_{\mu}$. It is connected and the inclusion $\KC_{\mu} \to \KC$ induces the isomorphism $\Lie(\KC_{\mu})^*_{\Z} \cong \Lie(\KC)^*_{\Z}/(y_\mu+\zz)$. Thus
\[
H^*_{\KC}(\Ctimes e_{\mu}) \cong H^*_{\KC_{\mu}}(e_{\mu}) \cong \Q[\KC_{\mu}^*] \cong \Q[\KC^*] / (y_{\mu}+\zz)
\]
and the restriction map is 
\begin{align} \label{aGr loc}
H_{\KC}^*(\aGrt) \longrightarrow \bigoplus_{\mu \in \str} \Q[\KC_{\mu}^*], \ \ \ \ P \mapsto (P|_{\mu})_{\mu \in \str}.
\end{align}
Putting (\ref{Gr loc}, \ref{wGr loc}, \ref{aGr loc}) together with $\hmap^*$ and $\whmap^*$, we have  the following commutative diagram
\begin{equation}\label{total-GKM}
\xymatrix{
H_{\trR}^*(\Gr) \ar[rr]\ar[d]_{\hmap^*}^{\cong}            && \bigoplus_{\mu} \Q[\trR^*] \ar[d]^{\cong}_{\kmap^*}\\
H_{\KC}^*(\aGrt) \ar[rr]                                    && \bigoplus_{\mu} \Q[\KC^*_{\mu}]\\
H_{\wR}^*(\wGr) \ar[rr]\ar[u]^{\whmap^*}_{\cong} && \bigoplus_{\mu} \Q[\wR^{\!\!*}]\ar[u]_{\cong}^{\wkmap^*}
}
\end{equation}
where the right vertical maps are induced from $\kmap_{\mu}:\KC_{\mu} \to \KC \to \trR$ and ${\wkmap}_{\!\mu}:\KC_{\mu} \to \KC \to \wR$ and they are isomorphisms because $\kmap_{\mu}$ and ${\wkmap}_{\!\mu}$ have finite kernels. 
The following are obtained by translating (\ref{Gr GKM}) to $H_{\KC}^*(\aGrt)$ and $H_{\wR}^*(\wGr)$ via this diagram. 
\begin{proposition}[GKM for $\wGr$] \label{GKM}
The restriction map (\ref{wGr loc}) is injective and the image is given by
\begin{align*}
 \left.\left\{\alpha \in \bigoplus_{\mu\in\str} \Q[\wR^{\!\!*}] \,\right|
 \begin{matrix}
 \alpha(\lambda)-\alpha(\mu) \emph{ is divisible by } w_{\mu}\wy_{\lambda}-w_{\lambda}\wy_{\mu} \\ \emph{ for any $\lambda$ and $\mu$ such that $|\lambda\cap\mu|=d-1$}
 \end{matrix}\right\}
\end{align*}
where $\wy_{\lambda}$ is defined in (\ref{new notations 1}).
\end{proposition}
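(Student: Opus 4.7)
The plan is to read off the desired GKM presentation directly from the commutative diagram \eqref{total-GKM}, by transporting the classical GKM description \eqref{Gr GKM} of $H^*_\trR(\Gr)$ through the ring isomorphisms in the left column and the fixed-point isomorphisms $\kmap^*_\mu, \wkmap^*_\mu$ in the right column. First I would deduce injectivity of \eqref{wGr loc} from the injectivity of \eqref{Gr loc} (classical GKM for $\Gr$) together with the fact that the vertical maps $\whmap^*$ (by Proposition \ref{key prop}) and $\wkmap^*_\mu$ (finite kernels) are isomorphisms, so injectivity propagates along the diagram to the bottom row.

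Next I would use commutativity of \eqref{total-GKM} to characterize the image: an element $\alpha = (\alpha(\mu))_\mu \in \bigoplus_\mu \Q[\wR^{\!\!*}]$ lies in the image of \eqref{wGr loc} precisely when there exists $\beta \in H^*_\trR(\Gr)$ with $\kmap^*_\mu(\beta(\mu)) = \wkmap^*_\mu(\alpha(\mu))$ in $\Q[\KC^*_\mu]$ for every $\mu \in \str$. By \eqref{Gr GKM}, such a $\beta$ exists precisely when $\beta(\lambda) - \beta(\mu)$ is divisible by $y_\lambda - y_\mu$ in $\Q[\trR^*]$ whenever $|\lambda \cap \mu| = d-1$. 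Thus the problem reduces to rewriting this divisibility directly as a condition on $\alpha$ inside $\Q[\wR^{\!\!*}]$.

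The main (and essentially only) computation is to identify the image of $w_\mu \wy_\lambda - w_\lambda \wy_\mu$ under the ring isomorphism $\wkmap^*_\mu : \Q[\wR^{\!\!*}] \to \Q[\KC^*_\mu] = \Q[\KC^*]/(y_\mu + \zz)$. Using $\wy_i = y_i - (w_i/a)\zz$ together with the relation $\zz \equiv -y_\mu$ in $\Q[\KC^*_\mu]$, a direct substitution gives
\begin{align*}
\wkmap^*_\mu\bigl(w_\mu \wy_\lambda - w_\lambda \wy_\mu\bigr) = w_\mu (y_\lambda - y_\mu) \quad \text{in } \Q[\KC^*_\mu].
\end{align*}
Since $w_\mu \in \Z_{\geq 1}$ is invertible in $\Q$, divisibility of $\alpha(\lambda) - \alpha(\mu)$ by $w_\mu \wy_\lambda - w_\lambda \wy_\mu$ in $\Q[\wR^{\!\!*}]$ is equivalent (via $\wkmap^*_\mu$) to divisibility of $\wkmap^*_\mu(\alpha(\lambda) - \alpha(\mu)) = \kmap^*_\mu(\beta(\lambda) - \beta(\mu))$ by $y_\lambda - y_\mu$ in $\Q[\KC^*_\mu]$, and (via $\kmap^*_\mu$) to the classical GKM divisibility on $\beta$ in $\Q[\trR^*]$. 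This yields the stated description. The only real obstacle is bookkeeping the linear change of coordinates between $\{y_i, \zz\}$ and $\{\wy_i\}$ inside the quotient $\Q[\KC^*_\mu]$, but as above it collapses to the single substitution.
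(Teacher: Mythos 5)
Your overall route is the same as the paper's: transport the classical description (\ref{Gr GKM}) through the commutative diagram (\ref{total-GKM}), with the only real work being the image of $w_{\mu}\wy_{\lambda}-w_{\lambda}\wy_{\mu}$ under the fixed-point isomorphisms. Your injectivity argument and your key computation $\wkmap_{\mu}^{*}(w_{\mu}\wy_{\lambda}-w_{\lambda}\wy_{\mu}) = w_{\mu}(y_{\lambda}-y_{\mu})$ are both correct (the latter is the paper's (\ref{connection of GKM's 20}) with the roles of $\lambda$ and $\mu$ swapped). However, one step as written is false: the asserted equality $\wkmap_{\mu}^{*}(\alpha(\lambda)-\alpha(\mu)) = \kmap_{\mu}^{*}(\beta(\lambda)-\beta(\mu))$ in $\Q[\KC_{\mu}^{*}]$. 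The compatibility between $\alpha$ and its preimage $\beta$ is componentwise, namely $\wkmap_{\nu}^{*}(\alpha(\nu)) = \kmap_{\nu}^{*}(\beta(\nu))$ for each $\nu$; at the $\lambda$-component this involves the maps based at $\lambda$, not at $\mu$. Viewing everything inside $\Q[\KC^{*}]$, you only know $\alpha(\lambda)-\beta(\lambda)\in(y_{\lambda}+\zz)$ and $\alpha(\mu)-\beta(\mu)\in(y_{\mu}+\zz)$, so after reducing modulo $y_{\mu}+\zz$ the two sides of your equation differ by an element of the ideal $(y_{\lambda}+\zz)=(y_{\lambda}-y_{\mu})$, which is nonzero in general. (For $n=2$, $d=1$, $\lambda=\{2\}$, $\mu=\{1\}$ and $\alpha(\lambda)=\wy_{1}$ one gets $\wkmap_{\mu}^{*}(\alpha(\lambda))=(1+w_{1}/a)y_{1}$ but $\kmap_{\mu}^{*}(\beta(\lambda))=y_{1}+(w_{1}/a)y_{2}$.)

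The gap is repairable in one line: since the discrepancy just computed lies in $(y_{\lambda}-y_{\mu})$, divisibility of one side by $y_{\lambda}-y_{\mu}$ is still equivalent to divisibility of the other, which is all your chain of equivalences needs. The paper avoids the issue by never comparing the $\lambda$- and $\mu$-components inside a single $\Q[\KC_{\mu}^{*}]$; instead it compares them in the double quotient $\Q[\KC^{*}]/(y_{\lambda}+\zz,y_{\mu}+\zz)$ (this is the role of the intermediate description of $H_{\KC}^{*}(\aGrt)$ in Proposition \ref{aGr GKM}), where $\wkmap_{\lambda}^{*}$ and $\wkmap_{\mu}^{*}$ induce one and the same isomorphism from $\Q[\wR^{\!\!*}]/(w_{\mu}\wy_{\lambda}-w_{\lambda}\wy_{\mu})$, as in (\ref{connection of GKM's 10}); both divisibility conditions then become the single condition that the two restrictions agree there. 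You should either add the observation that your two sides agree modulo $y_{\lambda}-y_{\mu}$, or recast the comparison in the double quotient as the paper does.
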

\begin{proposition}[GKM for $\aGrt$]\label{aGr GKM}
The restriction map (\ref{aGr loc}) is injective and the image is given by
\begin{align*}
 \left.\left\{P \in \bigoplus_{\mu\in\str} \Q[\KC_{\mu}^*] \,\right|\,
 \begin{matrix}
 P(\lambda) = P(\mu) \ \ \ \mbox{  in }\  \Q[\KC^*]/(y_{\lambda}+\zz, y_{\mu}+\zz)
 \\ \emph{ for any $\lambda$ and $\mu$ such that $|\lambda\cap\mu|=d-1$}
 \end{matrix}\right\}.
\end{align*}
\end{proposition}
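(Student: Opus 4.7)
The plan is to transport the known GKM presentation of $H^*_{\trR}(\Gr)$ recorded in (\ref{Gr GKM}) to $H^*_{\KC}(\aGrt)$ via the commutative diagram (\ref{total-GKM}). The left vertical $\hmap^*$ is already an isomorphism by Proposition \ref{key prop}, and the top row is injective with known image, so everything reduces to checking that the right vertical $\kmap^*$ is an isomorphism and then translating the divisibility condition in (\ref{Gr GKM}) through it.

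First I would verify that each component map $\kmap^*_\mu \colon \Q[\trR^*] \to \Q[\KC_\mu^*]$ is an isomorphism. The key observation is that the restriction to $\KC_\mu$ of the projection $\kmap \colon \KC = \trR \times \Ctimes \to \trR$ is itself an isomorphism of tori: an element $(t,s) \in \KC_\mu$ is determined by $t \in \trR$ via $s = t_\mu^{-1}$. Therefore $\kmap^*_\mu$ is the canonical map $\Q[y_1,\ldots,y_n] \xrightarrow{\cong} \Q[y_1,\ldots,y_n,\zz]/(y_\mu + \zz)$ sending $y_i \mapsto y_i$. Injectivity of (\ref{aGr loc}) is then immediate from the commutativity of (\ref{total-GKM}) together with Proposition \ref{key prop}, the fact that both vertical maps are isomorphisms, and the classical injectivity of the top row.

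To identify the image of (\ref{aGr loc}), I would trace the divisibility condition in (\ref{Gr GKM}) through $\kmap^*$. An element $P \in \bigoplus_\mu \Q[\KC_\mu^*]$ lies in the image iff $P = \kmap^*(\gamma)$ for some $\gamma$ such that $y_\lambda - y_\mu$ divides $\gamma(\lambda)-\gamma(\mu)$ in $\Q[y_1,\ldots,y_n]$ whenever $|\lambda \cap \mu| = d-1$. I would show that this divisibility is equivalent to the stated congruence $P(\lambda) \equiv P(\mu)$ in $\Q[\KC^*]/(y_\lambda + \zz, y_\mu + \zz)$. The forward direction is trivial: in this double quotient both $y_\lambda$ and $y_\mu$ equal $-\zz$, so $y_\lambda - y_\mu = 0$ and the product $(y_\lambda - y_\mu)q$ vanishes. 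For the converse, suppose $\gamma(\lambda)-\gamma(\mu) = a \cdot (y_\lambda + \zz) + b \cdot (y_\mu + \zz)$ for some $a, b \in \Q[\KC^*]$, where the left-hand side lies in $\Q[y_1,\ldots,y_n]$; specializing $\zz = -y_\lambda$ yields $\gamma(\lambda)-\gamma(\mu) = b|_{\zz = -y_\lambda} \cdot (y_\mu - y_\lambda)$ inside $\Q[y_1,\ldots,y_n]$, which is the required divisibility by $y_\lambda - y_\mu$.

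The only genuinely non-formal step is the last algebraic manipulation with the specialization $\zz = -y_\lambda$, and I expect it to be routine; no input beyond Proposition \ref{key prop} and the classical GKM presentation of $\Gr$ is needed.
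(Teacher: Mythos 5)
Your proposal is correct and follows essentially the same route as the paper: both arguments deduce injectivity from the commutativity of diagram (\ref{total-GKM}) and then identify the image by showing that divisibility by $y_\lambda-y_\mu$ in $\Q[\trR^*]$ is equivalent to the congruence modulo $(y_\lambda+\zz,\,y_\mu+\zz)$ under the isomorphism $\kmap_\mu^*$. The only cosmetic difference is that the paper proves the weighted statement (Proposition \ref{GKM}) via a linear isomorphism of quotients of $\Lie(\KC)^*_{\Q}$ and treats this proposition as the special case $w=0$, whereas your specialization $\zz=-y_\lambda$ carries out the same identification directly.
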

\noindent
\textit{Proof of Proposition \ref{aGr GKM} and Proposition \ref{GKM}}
\\
The injectivity of the maps (\ref{wGr loc}) and (\ref{aGr loc}) follows from the injectivity of the map (\ref{Gr loc}) by the commutativity of the diagram (\ref{total-GKM}). It remains to check that the GKM conditions are equivalent under the isomorphisms $\kmap^*$ and $\wkmap^*$. We prove it for $\wkmap$ because $\kmap$ is a special case of $\wkmap$. First note that, in Proposition \ref{GKM}, $\alpha(\lambda)-\alpha(\mu)$ is divisible by $w_{\mu}\wy_{\lambda}-w_{\lambda}\wy_{\mu}$ if and only if $ \alpha(\lambda)-\alpha(\mu) = 0$ in $\Q[\wR^{\!\!*}]/(w_{\mu}\wy_{\lambda}-w_{\lambda}\wy_{\mu})$. Therefore the GKM conditions are equivalent under $\wkmap^*$ if ${{\wkmap}_{\!\lambda}}^{\!\!*}$ and ${{\wkmap}_{\!\mu}}^{\!\!*}$ induce the isomorphism
\begin{align}\label{connection of GKM's 10}
\frac{\Q[\wR^{\!\!*}]}{(w_{\mu}\wy_{\lambda}-w_{\lambda}\wy_{\mu})} \to \frac{\Q[\KC^*]}{(y_{\lambda}+\zz, y_{\mu}+\zz)} , \ \ \ f \mapsto {{\wkmap}_{\!\lambda}}^{\!\!*}(f)={{\wkmap}_{\!\mu}}^{\!\!*}(f).
\end{align}
This follows from a straightforward computation. Indeed, we have
\begin{align}\label{connection of GKM's 20}
 w_{\lambda}\wy_{\mu} - w_{\mu}\wy_{\lambda}
 = w_{\lambda}(y_{\mu}+\zz)
 \quad \text{in } \Lie(\KC)^*_{\Q}/(y_\lambda+\zz)
\end{align}
and therefore the linear isomorphism ${\wkmap}_{\!\lambda}^*: \Lie(\wR)^*_{\Q} \to \Lie(\KC)^*_{\Q}/(y_\lambda+\zz)$ induces the linear isomorphism
\begin{align*}
\Lie(\wR)^*_{\Q}/(w_{\lambda}\wy_{\mu} - w_{\mu}\wy_{\lambda}) \cong \Lie(\KC)^*_{\Q}/(y_\lambda+\zz,y_{\mu}+\zz).
\end{align*}
\qed
\begin{remark}
Proposition \ref{aGr GKM} can be shown directly from Theorem 5.5 in \cite{HM} by using the description of $\wGr$ as the symplectic quotient of $\aGrt$ by the real torus $\wDR$ explained in Section \ref{subsection-Matsumura map}. 
\end{remark}
It is known that $\tilde{S}_{\lambda}|_{\lambda} = \prod_{(k,l)\in\invv(\lambda)} (y_{(k,l)\lambda} - y_{\lambda})$ and $\tilde{S}_{\lambda}|_{\mu}=0$ for all $\mu\ngeq\lambda$ (c.f. \cite{KT}). From this fact, together with the diagram (\ref{total-GKM}) and $\hmap^*(\tilde{S}_{\lambda})  = \aS_{\lambda}$, we have
\begin{proposition}\label{upper triangularity aS}
\begin{align*}
\aS_{\lambda}|_{\mu} = 
 \begin{cases}
  0 &\text{ if } \mu\ngeq\lambda, \\
  \prod_{(k,l)\in\invv(\lambda)} (y_{(k,l)\lambda}+\zz) &\text{ if } \mu=\lambda
 \end{cases}
 \ \ \ \ \ \ \ \mbox{ in } \Q[\KC^*]/(y_{\mu}+\zz).
\end{align*}
\end{proposition}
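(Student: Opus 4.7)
The plan is to derive both claims directly from the corresponding known formulas for the ordinary equivariant Schubert classes $\tS_{\lambda}$ of $\Gr$ recalled immediately before the statement, by transporting them along the ring isomorphism $\hmap^*$. The starting observation is the identity $\aS_{\lambda}= \hmap^*(\tS_{\lambda})$, which was already established after the definition of $\aS_{\lambda}$ in Section \ref{section-picture}.

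First I would unpack what ``restriction to the fixed orbit $\C^{\times}e_{\mu}$'' means in terms of $\hmap^*$. Applying the commutative diagram (\ref{total-GKM}) to the element $\tS_{\lambda} \in H_{\trR}^*(\Gr)$ and chasing it both ways, one reads off the identity
\[
\aS_{\lambda}|_{\mu} \;=\; \kmap_{\mu}^{*}\bigl(\tS_{\lambda}|_{\mu}\bigr) \qquad \text{in } \Q[\KC^*]/(y_{\mu}+\zz),
\]
where $\kmap_{\mu}^{*}:\Q[\trR^*]\to \Q[\KC^*_{\mu}]=\Q[\KC^*]/(y_{\mu}+\zz)$ is the map induced by the subtorus inclusion $\KC_{\mu}\hookrightarrow \KC$ composed with $\kmap:\KC\to \trR$, so that $\kmap_{\mu}^{*}(y_i)=y_i$ modulo $y_{\mu}+\zz$. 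Now the vanishing statement is immediate: if $\mu\not\geq\lambda$, then $\tS_{\lambda}|_{\mu}=0$, hence $\aS_{\lambda}|_{\mu}=0$.

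For the diagonal value, I substitute $\mu=\lambda$ into the known formula $\tS_{\lambda}|_{\lambda}=\prod_{(k,l)\in\invv(\lambda)}(y_{(k,l)\lambda}-y_{\lambda})$ and apply $\kmap_{\lambda}^{*}$. Since the relation $y_{\lambda}+\zz=0$ in $\Q[\KC^*]/(y_{\lambda}+\zz)$ gives $y_{\lambda}\equiv -\zz$, each factor becomes $y_{(k,l)\lambda}-y_{\lambda}\equiv y_{(k,l)\lambda}+\zz$, yielding the desired formula
\[
\aS_{\lambda}|_{\lambda}=\prod_{(k,l)\in\invv(\lambda)}(y_{(k,l)\lambda}+\zz) \qquad \text{in } \Q[\KC^*]/(y_{\lambda}+\zz).
\]
There is no genuine obstacle here; the only thing to be careful about is the direction of the maps in (\ref{total-GKM}) and the identification $H^{*}_{\KC}(\C^{\times}e_{\mu})\cong \Q[\KC^*]/(y_{\mu}+\zz)$, both of which are set up explicitly in the paragraphs preceding the proposition.
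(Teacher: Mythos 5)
Your proposal is correct and follows essentially the same route as the paper: the paper likewise deduces the proposition from the known formulas $\tS_{\lambda}|_{\mu}=0$ for $\mu\ngeq\lambda$ and $\tS_{\lambda}|_{\lambda}=\prod_{(k,l)\in\invv(\lambda)}(y_{(k,l)\lambda}-y_{\lambda})$, transported through the diagram (\ref{total-GKM}) via $\hmap^*(\tS_{\lambda})=\aS_{\lambda}$ and the relation $y_{\lambda}\equiv -\zz$ in $\Q[\KC^*]/(y_{\lambda}+\zz)$. Your write-up just makes the diagram chase and the substitution explicit where the paper leaves them implicit.
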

The next proposition is now immediate from Proposition \ref{upper triangularity aS}, the definition $\wS_{\lambda}=(\whmap^*)^{-1}(\aS_{\lambda})$ and (\ref{connection of GKM's 20}).
\begin{proposition}\label{can prop 50}
\begin{align*}
\emph{$\wS_{\lambda}$}|_{\mu} =
\begin{cases}
0 &  \text{ if $\mu\ngeq\lambda$},  \\
\prod_{(k,l)\in\invv(\lambda)}  \Big( \wy_{(k,l)\lambda} - \displaystyle{\frac{w_{(k,l)\lambda}}{w_{\lambda}}}\wy_{\lambda} \Big) &\text{ if $\mu=\lambda$}.
\end{cases}
\end{align*}
\end{proposition}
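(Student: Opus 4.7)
The strategy is to compute $\wS_\lambda|_\mu$ by passing through the commutative diagram (\ref{total-GKM}) and exploiting that the right vertical map $\wkmap^*$ is a componentwise isomorphism $\wkmap_{\!\mu}^*:\Q[\wR^{\!\!*}]\to \Q[\KC^*_\mu]$. Since $\whmap^*(\wS_\lambda)=\aS_\lambda$ and the restriction maps are injective, the relation $\wkmap_{\!\mu}^*(\wS_\lambda|_\mu)=\aS_\lambda|_\mu$ holds in $\Q[\KC^*]/(y_\mu+\zz)$ for every $\mu\in\str$. Thus the problem reduces to identifying the unique preimage of the right-hand side of Proposition~\ref{upper triangularity aS} under $\wkmap_{\!\mu}^*$.

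For the first case, suppose $\mu\not\geq\lambda$. Then Proposition~\ref{upper triangularity aS} gives $\aS_\lambda|_\mu=0$, and since $\wkmap_{\!\mu}^*$ is an isomorphism, I would immediately conclude $\wS_\lambda|_\mu=0$. For the diagonal case $\mu=\lambda$, I would invert $\wkmap_{\!\lambda}^*$ factor by factor on the product $\prod_{(k,l)\in\invv(\lambda)}(y_{(k,l)\lambda}+\zz)$ in $\Q[\KC^*]/(y_\lambda+\zz)$. The key algebraic identity is already packaged in (\ref{connection of GKM's 20}): applying it with $\mu$ replaced by $(k,l)\lambda$ yields
\begin{align*}
w_\lambda\wy_{(k,l)\lambda} - w_{(k,l)\lambda}\wy_\lambda = w_\lambda\bigl(y_{(k,l)\lambda}+\zz\bigr) \quad \text{in } \Lie(\KC)^*_\Q/(y_\lambda+\zz).
\end{align*}
Dividing by $w_\lambda$ (which is a nonzero integer by definition (\ref{definition for w_lambda}) since $a\geq 1$), the element $y_{(k,l)\lambda}+\zz$ is identified with $\wy_{(k,l)\lambda}-\tfrac{w_{(k,l)\lambda}}{w_\lambda}\wy_\lambda$ under the ring isomorphism $\wkmap_{\!\lambda}^*$.

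Taking the product over $(k,l)\in\invv(\lambda)$ and applying $(\wkmap_{\!\lambda}^*)^{-1}$ to the whole expression then gives
\begin{align*}
\wS_\lambda|_\lambda = \prod_{(k,l)\in\invv(\lambda)} \Bigl(\wy_{(k,l)\lambda}-\frac{w_{(k,l)\lambda}}{w_\lambda}\wy_\lambda\Bigr),
\end{align*}
as claimed. I do not anticipate any serious obstacle here: the only subtlety is bookkeeping the identification of $\Q[\wR^{\!\!*}]/(w_\lambda\wy_\mu-w_\mu\wy_\lambda)$ with $\Q[\KC^*]/(y_\lambda+\zz,y_\mu+\zz)$ for each pair (used in the $\mu\neq\lambda$ vanishing case, where we implicitly restrict to the appropriate quotient via $\wkmap_{\!\mu}^*$), but this is exactly what was verified in the proof of Propositions~\ref{GKM} and~\ref{aGr GKM}. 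So the proposition will follow as a direct translation of Proposition~\ref{upper triangularity aS} through the componentwise isomorphism $\wkmap^*$ in (\ref{total-GKM}).
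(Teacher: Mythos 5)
Your argument is correct and is exactly the paper's route: the paper proves this proposition in one line by citing Proposition \ref{upper triangularity aS}, the definition $\wS_{\lambda}=(\whmap^*)^{-1}(\aS_{\lambda})$, and the identity (\ref{connection of GKM's 20}), which is precisely the translation through the componentwise isomorphism $\wkmap_{\!\mu}^*$ that you carry out. Your write-up just makes the implicit bookkeeping explicit.
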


Having the upper-triangularity of the weighted Schubert classes as above, the proof of  \cite[Proposition 1]{KT} can be applied words by words to obtain
\begin{proposition} \label{can prop 120}
$\{\emph{$\wS_{\lambda}$}\}_{\lambda}$ is an $H^*(B\emph{$\wR$})$-module basis of $H_{\emph{$\wR$}}^*(\emph{$\wGr$})$.
\end{proposition}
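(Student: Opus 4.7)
The plan is to combine the upper-triangular restriction formula of Proposition \ref{can prop 50}, the injective localization of Proposition \ref{GKM}, and the freeness result of Proposition \ref{prop action 500}. Writing $A := H^*(B\wR) = \Q[\wy_1,\dots,\wy_n]$, I will consider the graded $A$-linear map
\[
\Phi : \bigoplus_{\lambda\in\str} A\cdot e_\lambda \longrightarrow H_{\wR}^*(\wGr), \qquad (c_\lambda)_\lambda \longmapsto \sum_\lambda c_\lambda \wS_\lambda,
\]
where the formal basis vector $e_\lambda$ is placed in degree $2l(\lambda)$; the goal is to show that $\Phi$ is an isomorphism.

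For linear independence, I will argue by choosing $\lambda_0$ minimal in the Bruhat order among those $\lambda$ with $c_\lambda \neq 0$ in a hypothetical relation $\sum_\lambda c_\lambda \wS_\lambda = 0$. Restricting to the fixed point $[e_{\lambda_0}]$ and using Proposition \ref{can prop 50}, every term with $\lambda\neq \lambda_0$ and $c_\lambda\neq 0$ vanishes at $\lambda_0$: such a $\lambda$ must satisfy $\lambda\not\leq \lambda_0$ by minimality, so $\wS_\lambda|_{\lambda_0}=0$. This leaves $c_{\lambda_0}\,\wS_{\lambda_0}|_{\lambda_0} = 0$, so it suffices to show the diagonal factor is nonzero. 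Rewriting each factor of the product in Proposition \ref{can prop 50} as $\wy_l - \wy_k + \tfrac{w_k-w_l}{w_{\lambda_0}}\wy_{\lambda_0}$, the coefficient of the basis element $\wy_l$ of $\Lie(\wR)^*_{\Q}$ equals $1$ since $l\notin\lambda_0$, so each factor, and hence the whole product, is nonzero in the integral domain $A$. Therefore $c_{\lambda_0}=0$, a contradiction, and $\Phi$ is injective.

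For surjectivity, both sides of $\Phi$ are graded free $A$-modules whose graded $\Q$-ranks in each degree are finite. By Proposition \ref{vanishing of odd} the Poincar\'e polynomial of $H^*(\wGr)$ is $\sum_\lambda t^{2l(\lambda)}$, and Proposition \ref{prop action 500} then gives
\[
P_{H_{\wR}^*(\wGr)}(t) = \Bigl(\sum_{\lambda\in\str} t^{2l(\lambda)}\Bigr)\cdot\frac{1}{(1-t^2)^n},
\]
which matches the Poincar\'e series of the source of $\Phi$ by construction. A homogeneous injection of graded $\Q$-vector spaces whose finite-dimensional graded pieces have equal dimensions in every degree is an isomorphism in every degree, so $\Phi$ is surjective, completing the proof.

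The only point requiring genuine care is the diagonal nonvanishing $\wS_{\lambda_0}|_{\lambda_0}\neq 0$ in $A$; the explicit factored expression supplied by Proposition \ref{can prop 50} handles this cleanly, because each factor separately contains the basis element $\wy_l$, with $l\notin\lambda_0$, appearing with coefficient $1$. Once this is in hand, the rest is a standard triangular-basis argument combined with a Hilbert-series comparison, exactly as in \cite[Proposition~1]{KT}.
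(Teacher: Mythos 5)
Your proof is correct, but the surjectivity half takes a genuinely different route from the paper's. The paper simply invokes the argument of \cite[Proposition~1]{KT} verbatim: linear independence by the same upper-triangularity argument you give, and spanning by a descending induction on the support of a class $\alpha$ --- one takes $\lambda$ minimal in the support, uses the GKM divisibility conditions of Proposition~\ref{GKM} at the edges below $\lambda$ to show that $\wS_{\lambda}|_{\lambda}$ divides $\alpha|_{\lambda}$, subtracts the corresponding multiple of $\wS_{\lambda}$, and shrinks the support. You instead deduce surjectivity from a Hilbert series comparison, feeding the Betti numbers of Proposition~\ref{vanishing of odd} into the freeness statement of Proposition~\ref{prop action 500} and matching against $\bigl(\sum_{\lambda} t^{2l(\lambda)}\bigr)/(1-t^2)^n$. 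Both arguments are sound. The KT-style induction is self-contained within the GKM picture and does not presuppose that $H_{\wR}^*(\wGr)$ is free or that its Poincar\'e series is known, whereas your argument leans on those earlier topological inputs; in exchange, yours avoids any use of the divisibility conditions in Proposition~\ref{GKM} (you only need restriction to fixed points, not injectivity or the image description of the localization map) and is shorter given what the paper has already established. Your verification that the diagonal restriction $\wS_{\lambda_0}|_{\lambda_0}$ is a product of nonzero linear forms --- via the coefficient of $\wy_l$ with $l\notin\lambda_0$ --- is the right way to nail down the one point the triangularity argument genuinely requires.
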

\begin{example}
The followings is $\wS_{14}$ in $H_{\wR}^*(\text{wGr}(2,4))$: 
\[ 
\unitlength 0.1in
\begin{picture}( 47.4000, 17.9600)( 15.4000,-32.5600)
\put(55.7000,-19.0000){\makebox(0,0)[lb]{$ _{\{1,2\}}$}}%
\put(61.7200,-24.1000){\makebox(0,0)[lb]{$ _{\{1,4\}}$}}%
\put(49.7000,-24.2000){\makebox(0,0)[lb]{$ _{\{2,3\}}$}}%
\put(56.3000,-25.9000){\makebox(0,0)[lb]{$ _{\{2,4\}}$}}%
\put(57.7100,-22.6700){\makebox(0,0)[lb]{$ _{\{1,3\}}$}}%
\put(55.8000,-29.7000){\makebox(0,0)[lb]{$ _{\{3,4\}}$}}%
%
\special{pn 8}%
\special{pa 5698 1926}%
\special{pa 5258 2370}%
\special{fp}%
\special{pa 5258 2370}%
\special{pa 5698 2810}%
\special{fp}%
\special{pa 5698 2810}%
\special{pa 6140 2370}%
\special{fp}%
\special{pa 6140 2370}%
\special{pa 5698 1926}%
\special{fp}%
\special{pa 5258 2370}%
\special{pa 5588 2478}%
\special{fp}%
\special{pa 5588 2478}%
\special{pa 6140 2370}%
\special{fp}%
\special{pa 5698 1926}%
\special{pa 5588 2478}%
\special{fp}%
\special{pa 5588 2478}%
\special{pa 5698 2810}%
\special{fp}%
%
\special{pn 8}%
\special{pa 5698 1926}%
\special{pa 5754 2258}%
\special{dt 0.030}%
\special{pa 5754 2258}%
\special{pa 6140 2370}%
\special{dt 0.030}%
\special{pa 5754 2258}%
\special{pa 5258 2370}%
\special{dt 0.030}%
\special{pa 5754 2258}%
\special{pa 5698 2810}%
\special{dt 0.030}%
\put(19.5000,-17.1000){\makebox(0,0)[lb]{$(\tow_{23}-\frac{w_{23}}{w_{12}}\tow_{12})(\tow_{24}-\frac{w_{24}}{w_{12}}\tow_{12})$}}%
\put(30.9000,-24.6000){\makebox(0,0)[lb]{$(\tow_{24}-\frac{w_{24}}{w_{14}}\tow_{14})(\tow_{34}-\frac{w_{34}}{w_{14}}\tow_{14})$}}%
\put(16.4000,-24.5000){\makebox(0,0)[lb]{$0$}}%
\put(21.5000,-26.8000){\makebox(0,0)[lb]{$0$}}%
\put(25.1000,-22.3000){\makebox(0,0)[lb]{$(\tow_{23}-\frac{w_{23}}{w_{13}}\tow_{13})(\tow_{34}-\frac{w_{34}}{w_{13}}\tow_{13})$}}%
\put(23.6000,-31.9000){\makebox(0,0)[lb]{$0$}}%
%
\special{pn 8}%
\special{pa 2402 1744}%
\special{pa 1752 2396}%
\special{fp}%
\special{pa 1752 2396}%
\special{pa 2402 3044}%
\special{fp}%
\special{pa 2402 3044}%
\special{pa 3052 2396}%
\special{fp}%
\special{pa 3052 2396}%
\special{pa 2402 1744}%
\special{fp}%
\special{pa 1752 2396}%
\special{pa 2240 2558}%
\special{fp}%
\special{pa 2240 2558}%
\special{pa 3052 2396}%
\special{fp}%
\special{pa 2402 1744}%
\special{pa 2240 2558}%
\special{fp}%
\special{pa 2240 2558}%
\special{pa 2402 3044}%
\special{fp}%
%
\special{pn 8}%
\special{pa 2402 1744}%
\special{pa 2484 2232}%
\special{dt 0.030}%
\special{pa 2484 2232}%
\special{pa 3052 2396}%
\special{dt 0.030}%
\special{pa 2484 2232}%
\special{pa 1752 2396}%
\special{dt 0.030}%
\special{pa 2484 2232}%
\special{pa 2402 3044}%
\special{dt 0.030}%
%
\special{pn 8}%
\special{pa 1540 1460}%
\special{pa 6280 1460}%
\special{pa 6280 3256}%
\special{pa 1540 3256}%
\special{pa 1540 1460}%
\special{ip}%
\end{picture}%
 
\]
where the vertices are the elements of $\strEG$ and there is an edge for each pair of $\lambda$ and $\mu$ satisfying $|\lambda\cap\mu|=1$. 
\end{example}
\section{Structure Constants and Positivity} \label{section-final}
Since $\{\wS_{\lambda}\}_{\lambda}$ is an $H^*(B\wR)$-module basis of $H_{\wR}^*(\wGr)$, we can expand their pairwise cup product uniquely over $H^*(B\wR)$:
\begin{align}\label{equiv str const}
 \wS_{\lambda}\wS_{\mu} = \sum_{\nu} \wc_{\lambda\mu}^{\nu} \wS_{\nu} \ \ \  \mbox{where} \ \wc_{\lambda\mu}^{\nu} \in H^*(B\wR).
\end{align}
In \cite{GrahamPos} and \cite{KT}, it is shown that we can express $\trc_{\lambda,\mu}^{\nu}$ as a polynomial in $u_i$'s with non-negative coefficients where $u_i:=y_{i+1} - y_i\in\Lie(\trR)_{\Z}^*$ for each $i=1,\cdots,n-1$. In this section, we derive a formula for $\wc_{\lambda,\mu}^{\nu}$ from any given formula for $\trc_{\lambda,\mu}^{\nu}$. In particular, the formula of $\wc_{\lambda,\mu}^{\nu}$ is expressed in terms  an independent subset $\{\wu_i\}_{i=1,\cdots,n-1}$ of $\Lie(\wR)_{\Z}^*\otimes \Q$ in such a way that the positivity of $\trc_{\lambda,\mu}^{\nu}$ implies the positivity of $\wc_{\lambda,\mu}^{\nu}$ with respect to $\{\wu_i\}_{i=1,\cdots,n-1}$ if $w_1\leq\cdots \leq w_n$. Moreover, a manifestly positive formula for the structure constants $\{\w c_{\lambda\mu}^{\nu}\}$ of the ordinary cohomology $H^*(\wGr)$ is also obtained by specializing the one for $\wc_{\lambda\mu}^{\nu}$ at $\wu_1 = \cdots = \wu_{n-1}=0$.
\subsection{Equivariant Structure Constants}
We start with the following lemma which describes the divsor Schubert class.
\begin{lemma}\label{aSdiv=y_id+z}
Let $\id$ be the unique minimum in $\str$ with respect to the Bruhat order and $\di$  the unique element with $l(\id)=1$. We have $\aS_{\di}=(y_{\id}+\zz) \cdot 1$.
\end{lemma}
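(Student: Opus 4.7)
Plan: The strategy is to reduce the identity to a short check at just two "fixed orbits" by exploiting the $H^*(B\trR)$-module basis structure on $H^*_\KC(\aGrt)$ that $\hmap^*$ transfers from $H^*_\trR(\Gr)$ via Proposition \ref{key prop}.

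First I would note that, because $\hmap^*$ is a ring isomorphism over $H^*(B\trR)$ and $\{\tS_\lambda\}_\lambda$ is a free $H^*(B\trR)$-module basis of $H^*_\trR(\Gr)$, the classes $\{\aS_\lambda\}_\lambda$ form a free $H^*(B\trR)$-module basis of $H^*_\KC(\aGrt)$. In degree $2$ this yields
\[
H^2_\KC(\aGrt) \;=\; H^2(B\trR)\cdot 1 \;\oplus\; \Q\cdot \aS_{\di},
\]
since $\di = \{n-d,\, n-d+2,\dots,n\}$ is the unique length-one element of $\str$ (it has the single inversion $(n-d, n-d+1)$, and $(n-d, n-d+1)\di = \id$). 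Crucially, although $z$ is a degree-$2$ class in the scalar ring $H^*(B\KC)$, it does \emph{not} belong to $H^2(B\trR)\cdot 1$, so $(y_\id + z)\cdot 1$ necessarily has a nontrivial $\aS_{\di}$-component, and the lemma amounts to computing that component explicitly.

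Next I would set $\Phi := \aS_{\di} - (y_\id + z)\cdot 1 \in H^2_\KC(\aGrt)$, write the unique expansion $\Phi = p\cdot 1 + c\,\aS_{\di}$ with $p \in H^2(B\trR)$ and $c \in \Q$, and pin down $p, c$ via the injective GKM-type localization of Proposition \ref{aGr GKM} at the two orbits $\id$ and $\di$, using the explicit restrictions supplied by Proposition \ref{upper triangularity aS}. At $\id$: the proposition gives $\aS_{\di}|_\id = 0$ (since $\id\not\geq\di$), and $(y_\id + z)|_\id = 0$ tautologically in $\Q[\KC^*]/(y_\id+z)$; since the inclusion $\Q[y_1,\dots,y_n] \hookrightarrow \Q[\KC^*]/(y_\id+z)$ is injective, the equation $\Phi|_\id = p = 0$ forces $p=0$. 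At $\di$: the proposition gives $\aS_{\di}|_\di = y_{(n-d, n-d+1)\di} + z = y_\id + z$, which matches $(y_\id + z)|_\di$; hence $\Phi|_\di = c(y_\id+z)$, which under the identification $\Q[\KC^*]/(y_\di + z)\cong \Q[y_1,\dots,y_n]$ becomes $c(y_{n-d+1} - y_{n-d})$, a nonzero element of the integral domain $\Q[y_1,\dots,y_n]$; therefore $c=0$. So $\Phi=0$, as claimed.

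The one conceptually delicate point is recognizing that the Schubert basis is natural over $H^*(B\trR)$ rather than over the larger scalar ring $H^*(B\KC)$, so that the "extra" variable $z$ itself has a genuine Schubert expansion; once that viewpoint is adopted, the proof is a routine two-orbit restriction check with no further difficulty.
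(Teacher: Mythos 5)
Your argument is correct, but it follows a different route from the paper's. The paper's proof is a one\--line localization check: it imports the complete restriction formula $\tS_{\di}|_{\mu}=y_{\id}-y_{\mu}$ for \emph{every} $\mu$ from \cite[Lemma 3]{KT}, applies $\hmap^*$, observes that $y_{\id}-y_{\mu}\equiv y_{\id}+\zz$ in $\Q[\KC^*]/(y_{\mu}+\zz)$ for each $\mu$, and concludes by the injectivity of the localization map of Proposition \ref{aGr GKM} that $\aS_{\di}=(y_{\id}+\zz)\cdot 1$. You instead avoid the full divisor localization formula: you use only the data already recorded in Proposition \ref{upper triangularity aS} (vanishing at $\mu\not\geq\di$ and the value at $\mu=\di$), and compensate by invoking the free $H^*(B\trR)$-module structure transported by $\hmap^*$ to expand the difference $\Phi=\aS_{\di}-(y_{\id}+\zz)\cdot 1$ in degree $2$ as $p\cdot 1+c\,\aS_{\di}$, after which restriction at the two orbits $\id$ and $\di$ kills $p$ and $c$ in turn. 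Both proofs rest on the injectivity of the GKM restriction; the paper's buys brevity at the price of citing the full formula for $\tS_{\di}|_{\mu}$, while yours is self\--contained relative to the propositions already established in the paper at the price of the extra degree\--$2$ bookkeeping. One small remark: your opening observation that $\zz\cdot 1$ cannot lie in $H^2(B\trR)\cdot 1$ is only motivational and is not needed as a logical step (as written it would be circular if used as one), but since your actual argument determines the $\aS_{\di}$\--coefficient directly from the restrictions, there is no gap.
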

\begin{proof}
Since $\tS_{\di}|_{\mu} = y_{\id}-y_{\mu}$ (\cite[Lemma 3]{KT}) for each $\mu \in \str$, we have
\[
\aS_{\di}|_{\mu} = y_{\id}-y_{\mu} = y_{\id}+\zz \ \ \ \ \mbox{ in }  \Q[\KC^*]/(y_{\mu}+\zz).
\] 
Therefore the claim holds: $\aS_{\di}=(y_{\id}+z)\aS_{\id} = (y_{\id}+z)\cdot 1$. 
\end{proof}
Now we obtain the following expansion formula for the product $\aS_{\di} \aS_{\lambda}$ over $\Q[\wR^{\!\!*}]$.
\begin{proposition}\label{weighted Pieri-rule}
(\textit{The weighted Pieri-rule})
\[
\aS_{\di} \aS_{\lambda} = \Big(\wy_{\id} - \frac{w_{\id}}{w_{\lambda}} \wy_{\lambda}\Big) \aS_{\lambda} + \sum_{\lambda' \to \lambda} \frac{w_{\id}}{w_{\lambda}} \aS_{\lambda'}
\]
\end{proposition}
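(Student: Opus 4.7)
The plan is to combine Lemma~\ref{aSdiv=y_id+z} with the classical equivariant Pieri rule for $\Gr$, and to reduce the weighted statement to a short algebraic identity in $\Q[\KC^*]$.

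By Lemma~\ref{aSdiv=y_id+z}, $\aS_{\di} = (y_{\id}+\zz)\cdot 1$, so
\[
\aS_{\di}\,\aS_{\lambda} \;=\; (y_{\id}+\zz)\,\aS_{\lambda},
\]
where the product uses the $\Q[\KC^*]$-module structure on $H_{\KC}^*(\aGrt)$. On the other hand, the classical equivariant Pieri/Monk formula for the divisor class on the ordinary Grassmannian (see \cite{KT}),
\[
\tS_{\di}\,\tS_{\lambda} \;=\; (y_{\id}-y_{\lambda})\,\tS_{\lambda} \;+\; \sum_{\lambda'\to\lambda}\tS_{\lambda'},
\]
pulled back along $\hmap^*$ yields the analogous identity
\[
\aS_{\di}\,\aS_{\lambda} \;=\; (y_{\id}-y_{\lambda})\,\aS_{\lambda} \;+\; \sum_{\lambda'\to\lambda}\aS_{\lambda'}.
\]
Equating these two expressions for $\aS_{\di}\aS_{\lambda}$ produces the key auxiliary identity
\[
\sum_{\lambda'\to\lambda}\aS_{\lambda'} \;=\; (y_{\lambda}+\zz)\,\aS_{\lambda},
\]
which rewrites the sum of covering classes as a polynomial multiple of $\aS_{\lambda}$.

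Next, I will substitute this identity into the right-hand side of the claim, which collapses it to
\[
\Big(\wy_{\id} \;-\; \tfrac{w_{\id}}{w_{\lambda}}\wy_{\lambda} \;+\; \tfrac{w_{\id}}{w_{\lambda}}(y_{\lambda}+\zz)\Big)\,\aS_{\lambda}.
\]
Thus the proposition reduces to the scalar identity
\[
\wy_{\id} - \tfrac{w_{\id}}{w_{\lambda}}\wy_{\lambda} + \tfrac{w_{\id}}{w_{\lambda}}(y_{\lambda}+\zz) \;=\; y_{\id}+\zz
\]
in $\Q[\KC^*]$, which I verify by expanding $\wy_i = y_i - \tfrac{w_i}{\twt}\zz$ and using $w_{\lambda} = \twt + \sum_{i\in\lambda}w_i$; the $y_{\lambda}$-terms cancel and a short manipulation reduces the $\zz$-coefficient to $1$.

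The proof is essentially forced once the right reformulation is spotted; there is no serious obstacle. The only step requiring a small amount of insight is the observation that, via Lemma~\ref{aSdiv=y_id+z}, the classical Pieri rule encodes precisely the identity $\sum_{\lambda'\to\lambda}\aS_{\lambda'}=(y_{\lambda}+\zz)\aS_{\lambda}$, after which the weighted statement is an algebraic manipulation in $\Q[\KC^*]$. A backup plan would be to argue vertex-by-vertex through the GKM description of Proposition~\ref{aGr GKM} using the formulas of Proposition~\ref{upper triangularity aS}: the only nontrivial checks are at $\mu=\lambda$ and at $\mu=\lambda'$ for each $\lambda'\to\lambda$, and both reduce to the same scalar identity displayed above.
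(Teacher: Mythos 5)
Your proposal is correct and follows essentially the same route as the paper: both combine Lemma \ref{aSdiv=y_id+z} with the pullback along $\hmap^*$ of the classical equivariant Pieri rule to extract the identity $\sum_{\lambda'\to\lambda}\aS_{\lambda'}=(y_{\lambda}+\zz)\aS_{\lambda}$, and then finish by an algebraic manipulation using $y_{\nu}+\zz=\wy_{\nu}+\tfrac{w_{\nu}}{\twt}\zz$. The only difference is cosmetic (you verify the claimed right-hand side collapses to $(y_{\id}+\zz)\aS_{\lambda}$, while the paper expands forward), so no further comment is needed.
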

\begin{proof}
By the isomorphism $\hmap^*$, the equivariant Pieri-rule given in \cite[Proposition 2]{KT} implies
\begin{align}
\label{aff Pieri 100}
 \aS_{\di}\aS_{\lambda} = (y_{\id}-y_{\lambda})\aS_{\lambda}+\sum_{\lambda'\to\lambda}\aS_{\lambda'}
\end{align}
Together with Lemma \ref{aSdiv=y_id+z}, we obtain
\begin{align} \label{aff Pieri}
 0 = -(y_{\lambda}+\zz)\aS_{\lambda} +\sum_{\lambda'\to\lambda}\aS_{\lambda'}\ .
\end{align}
Multiply both sides of this equation by $\frac{w_{\id}}{w_{\lambda}}$, and then  again by Lemma \ref{aSdiv=y_id+z} we get
\begin{align*}
 \aS_{\di} \aS_{\lambda} = (y_{\id}+z)\aS_{\lambda}-\frac{w_{\id}}{w_{\lambda}}(y_{\lambda}+\zz)\aS_{\lambda} +\sum_{\lambda'\to\lambda}\frac{w_{\id}}{w_{\lambda}}\aS_{\lambda'}\ .
\end{align*}
Since $y_{\nu}+\zz = \wy_{\nu}+\frac{w_{\nu}}{a}\zz$ in $ \Lie(\KC)_{\Q}^*$ for all $\nu$, the terms with $z$ cancel and the claim follows.
\end{proof}
Let $K_{1^r \eta}^{\nu}$ be the coefficient in $\Q[\wR^{\!\!*}]$ for the following product.
\begin{align*}
(\aS_{\di})^{r} \aS_{\eta}
= \sum_{\nu} K_{1^r \eta}^{\nu} \aS_{\nu}.
\end{align*}
By applying Proposition \ref{weighted Pieri-rule} repeatedly, we can compute $K_{1^r \eta}^{\nu}$ explicitly. For example, for $r=2$, we have
\begin{align*}
(\aS_{\di})^2 \aS_{\eta}
&= \aS_{\di} \left( \Big(\wy_{\id} - \frac{w_{\id}}{w_{\eta}} \wy_{\eta}\Big) \aS_{\eta} + \sum_{\eta' \to \eta} \frac{w_{\id}}{w_{\eta}} \aS_{\eta'} \right) \\
&= \Big(\wy_{\id} - \frac{w_{\id}}{w_{\eta}} \wy_{\eta}\Big)^2 \aS_{\eta} \\
&\qquad\qquad
+ \sum_{\eta' \to \eta} \left( 
\Big(\wy_{\id} - \frac{w_{\id}}{w_{\eta'}} \wy_{\eta'}\Big) \frac{w_{\id}}{w_{\eta}}  
+ \frac{w_{\id}}{w_{\eta}} \Big(\wy_{\id} - \frac{w_{\id}}{w_{\eta}} \wy_{\eta}\Big) 
\right) \aS_{\eta'}
+ \sum_{\eta'' \to \eta' \to \eta} \frac{w_{\id}}{w_{\eta'}}\frac{w_{\id}}{w_{\eta}} \aS_{\eta''}.
\end{align*}
The general formula for $K_{1^r \eta}^{\nu}$ is recorded without proof as follows.
\begin{lemma}\label{lem:K1eta}
If $\nu \not\geq \mu$, $K_{1^r \eta}^{\nu}=0$.  If $\nu \geq \mu$, 
\begin{align}\label{weighted Kostka}
K_{1^r \eta}^{\nu} =\sum_{\substack{\nu = \nu^0 \rightarrow\nu^{1}\rightarrow\cdots \\ \rightarrow\nu^{l-1}\rightarrow \nu^l = \eta}} \sum_{J} \frac{w_{\nu}}{w_{\id}} \prod_{q=0}^{l} \frac{w_{\id}}{w_{\nu^q}}\Big(\wy_{\id} - \frac{w_{\id}}{w_{\nu^q}} \wy_{\nu^q}\Big)^{j_q},
\end{align}
where $l:=l(\nu)-l(\mu)$ and $J$ runs over all sequences $(j_0,\cdots,j_l)$ of non-negative integers satisfying $j_0+\cdots+j_l=r-l$. In particular, 
\[
K_{1^r \eta}^{\eta} =  \Big(\wy_{\id} - \frac{w_{\id}}{w_{\eta}} \wy_{\eta}\Big)^{r}.
\]
\end{lemma}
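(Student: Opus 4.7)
The plan is to prove (\ref{weighted Kostka}) by induction on $r$, with Proposition \ref{weighted Pieri-rule} as the only tool. The base case $r=0$ is immediate: $(\aS_{\di})^0\aS_{\eta}=\aS_{\eta}$, so $K_{1^0\eta}^{\nu}=\delta_{\nu,\eta}$, and the right-hand side of (\ref{weighted Kostka}) collapses to the unique length-zero chain $\nu^0=\nu=\eta$ paired with the empty tuple $J$, contributing $1$ when $\nu=\eta$ and $0$ otherwise. The vanishing $K_{1^r\eta}^{\nu}=0$ for $\nu\not\geq\eta$ will come out for free from the induction step below, since in the Pieri recursion a covering $\nu\to\nu'$ combined with $\nu'\geq\eta$ forces $\nu\geq\eta$.

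For the inductive step, I would expand $(\aS_{\di})^{r+1}\aS_{\eta}=\aS_{\di}\sum_{\nu'}K_{1^r\eta}^{\nu'}\aS_{\nu'}$, apply Proposition \ref{weighted Pieri-rule} to each $\aS_{\di}\aS_{\nu'}$, and read off the coefficient of $\aS_{\nu}$ to obtain the recursion
\begin{align*}
K_{1^{r+1}\eta}^{\nu}
=\Bigl(\wy_{\id}-\tfrac{w_{\id}}{w_{\nu}}\wy_{\nu}\Bigr)K_{1^r\eta}^{\nu}
+\sum_{\nu\to\nu'}\tfrac{w_{\id}}{w_{\nu'}}\,K_{1^r\eta}^{\nu'}.
\end{align*}
It then suffices to check that the right-hand side of (\ref{weighted Kostka}) satisfies this recursion.

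The check is pure bookkeeping. A useful first observation is that since $\nu^0=\nu$, the scalar prefactor simplifies as $\tfrac{w_{\nu}}{w_{\id}}\prod_{q=0}^{l}\tfrac{w_{\id}}{w_{\nu^q}}=\prod_{q=1}^{l}\tfrac{w_{\id}}{w_{\nu^q}}$, i.e.\ one weight per edge of the chain. Now split the sum over chains $\nu=\nu^0\to\cdots\to\nu^l=\eta$ and tuples $J$ contributing to $K_{1^{r+1}\eta}^{\nu}$ according to whether $j_0\geq 1$ or $j_0=0$. In the first case, peeling off one copy of $\wy_{\id}-\tfrac{w_{\id}}{w_{\nu}}\wy_{\nu}$ from the $\nu^0$-stay factor leaves exactly the contribution of the same chain with $J$ replaced by $(j_0-1,j_1,\dots,j_l)$ to $K_{1^r\eta}^{\nu}$, reproducing the first term of the recursion. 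In the second case, peeling off the leading edge $\nu^0\to\nu^1$ produces the factor $\tfrac{w_{\id}}{w_{\nu^1}}=\tfrac{w_{\id}}{w_{\nu'}}$ with $\nu'=\nu^1$, and the remaining shorter chain $\nu'\to\nu^2\to\cdots\to\nu^l=\eta$ with $J'=(j_1,\dots,j_l)$ contributes precisely to $K_{1^r\eta}^{\nu'}$; summing over all $\nu'$ with $\nu\to\nu'$ reproduces the second term. The only mildly delicate point is tracking indices in this peeling step, and I anticipate no substantive obstacle.
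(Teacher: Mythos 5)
Your proof is correct: the recursion $K_{1^{r+1}\eta}^{\nu}=\bigl(\wy_{\id}-\tfrac{w_{\id}}{w_{\nu}}\wy_{\nu}\bigr)K_{1^{r}\eta}^{\nu}+\sum_{\nu\to\nu'}\tfrac{w_{\id}}{w_{\nu'}}K_{1^{r}\eta}^{\nu'}$ does follow from Proposition \ref{weighted Pieri-rule} (the Pieri coefficient $\tfrac{w_{\id}}{w_{\lambda}}$ attached to $\lambda'\to\lambda$ correctly becomes $\tfrac{w_{\id}}{w_{\nu'}}$ when one reads off the coefficient of $\aS_{\nu}$), your simplification of the prefactor to one factor $\tfrac{w_{\id}}{w_{\nu^q}}$ per edge matches the displayed $r=2$ computation, and the split on $j_0\geq 1$ versus $j_0=0$ verifies the closed form. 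This is exactly the route the paper indicates --- it says the formula is obtained by applying the weighted Pieri rule repeatedly and then records the lemma without proof --- so you have simply supplied the induction the authors omitted.
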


For each pair $\alpha=(i,j)$ of integers in $[n]$ such that $i>j$, let
\begin{align}\notag
 &u_{\alpha}:=y_i-y_j \in \Q[\trR^*], \\ \label{def of wu_alpha}
 &\wu_{\alpha}:=(\wy_i-\wy_j)-\frac{w_i-w_j}{w_{\id}}\wy_{\id} \in \Q[\wR^{\!\!*}], \\ \notag
 &w(\alpha):=w_i-w_j \ \ \in \Q. 
\end{align}
For simplicity, let $u_i:=u_{(i+1,i)}$ and $\wu_i:=\wu_{(i+1,i)}$ for $i=1,\cdots,n-1$. We can easily check that $\{\wu_1,\cdots,\wu_{n-1}\}$ is linearly independent in $\Lie(\wR)^*_{\Z}\otimes\Q$ and each $\wu_{\alpha}$ is a linear combination of $\wu_i$'s with non-negative coefficients.

The next proposition gives the essential equation to relate the $\Q[\trR^*]$-action to the $\Q[\wR^{\!\!*}]$-action in $H_{\KC}^*(\aGrt)$ and it follows from Lemma \ref{aSdiv=y_id+z} immediately.
\begin{proposition}\label{translation formula} In $H_{\KC}^*(\aGrt)$, we have
\begin{align*}
 y_i \cdot 1 = \Big(\wy_i-\frac{w_i}{w_{\id}}\wy_{\id}\Big) \cdot 1  + \frac{w_i}{w_{\id}}\aS_{\di} \ \ \ \mbox{ and }  \ \ u_{\alpha} \cdot 1  = \wu_{\alpha} \cdot 1  + \frac{w(\alpha)}{w_{\id}}\aS_{\di}.
\end{align*}
\end{proposition}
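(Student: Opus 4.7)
The plan is to unpack both identities by translating between the two natural choices of basis for the rational Lie algebra dual, namely $\{y_1,\dots,y_n,\zz\}$ for $\Lie(\KC)^*_{\Q}$ and $\{\wy_1,\dots,\wy_n\}$ (pulled back along $\wkmap$) for $\Lie(\wR)^*_{\Q}$, and then to apply Lemma \ref{aSdiv=y_id+z} to convert the leftover $\zz$ into the Schubert class $\aS_{\di}$.

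Concretely, first I would record the conversion $\wy_i=y_i-\tfrac{w_i}{a}\zz$ and sum over $i\in\id$ to obtain the key identity
\[
\wy_{\id} \;=\; y_{\id}-\tfrac{w_{\id}-a}{a}\,\zz,
\qquad\text{equivalently}\qquad
y_{\id}+\zz \;=\; \wy_{\id}+\tfrac{w_{\id}}{a}\,\zz
\]
in $\Q[\KC^*]$, using that $w_{\id}=a+\sum_{i\in\id}w_i$ by definition \eqref{definition for w_lambda}. Combining this with Lemma \ref{aSdiv=y_id+z}, which tells us that $\aS_{\di}=(y_{\id}+\zz)\cdot 1$ in $H^*_{\KC}(\aGrt)$, yields the relation
\[
\zz\cdot 1 \;=\; \tfrac{a}{w_{\id}}\bigl(\aS_{\di}-\wy_{\id}\cdot 1\bigr).
\]

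Next I would substitute this into $y_i\cdot 1=\wy_i\cdot 1+\tfrac{w_i}{a}\,\zz\cdot 1$, whereupon the factors of $a$ cancel and the first formula drops out directly:
\[
y_i\cdot 1 \;=\; \Bigl(\wy_i-\tfrac{w_i}{w_{\id}}\wy_{\id}\Bigr)\cdot 1+\tfrac{w_i}{w_{\id}}\aS_{\di}.
\]
For the second identity, since both $u_{\alpha}=y_i-y_j$ and $\wu_{\alpha}=(\wy_i-\wy_j)-\tfrac{w_i-w_j}{w_{\id}}\wy_{\id}$ are $\Q$-linear in the generators, and $w(\alpha)=w_i-w_j$, I would simply take the difference of the first formula applied to $i$ and to $j$; everything assembles into $u_{\alpha}\cdot 1=\wu_{\alpha}\cdot 1+\tfrac{w(\alpha)}{w_{\id}}\aS_{\di}$.

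There is no serious obstacle here. The only thing to be careful about is the bookkeeping of the scalar $a$ versus $w_{\id}=a+w_1+\cdots+w_d$, so that the substitution of $\zz$ genuinely produces the coefficient $\tfrac{w_i}{w_{\id}}$ (and not $\tfrac{w_i}{a}$) in front of $\aS_{\di}$; this is exactly the reason the identity $y_{\id}+\zz=\wy_{\id}+\tfrac{w_{\id}}{a}\zz$ (rather than a naive one) must be recorded first.
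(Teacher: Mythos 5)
Your proposal is correct and follows exactly the route the paper intends: the paper's own proof consists of the single remark that the identities ``follow from Lemma \ref{aSdiv=y_id+z} immediately,'' and your argument is precisely the careful unpacking of that claim, converting $\zz\cdot 1$ into $\frac{a}{w_{\id}}(\aS_{\di}-\wy_{\id}\cdot 1)$ via $y_{\id}+\zz=\wy_{\id}+\frac{w_{\id}}{a}\zz$ and substituting into $y_i=\wy_i+\frac{w_i}{a}\zz$. The bookkeeping of $a$ versus $w_{\id}$ that you flag is indeed the only delicate point, and you handle it correctly.
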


Let $Q$ be a formal variable. For each finite collection $I=\{\alpha_1,\cdots,\alpha_p\}$ of pairs of integers in $[n]$ as above, define $\wu_{I}^{(0)}, \wu_{I}^{(1)},\cdots, \wu_{I}^{(p)} \in \Q[\wR^{\!\!*}]$ by  
\[
\Big(\wu_{\alpha_1} + \frac{w(\alpha_1)}{w_{\id}}Q\Big) \cdots \Big(\wu_{\alpha_p} + \frac{w(\alpha_p)}{w_{\id}}Q\Big)
 = \sum_{r=0}^p \wu_{I}^{(r)} Q^r
\]
Explicitly, we have
\begin{align*}
\wu_{I}^{(r)}
=\sum_{1\leq s_1<\cdots<s_r\leq p}  \frac{w(\alpha_{s_1})}{w_{\id}}\cdots \frac{w(\alpha_{s_r})}{w_{\id}} \frac{\wu_{\alpha_1}\cdots\wu_{\alpha_p}}{\wu_{\alpha_{s_1}}\cdots\wu_{\alpha_{s_r}}}.
\end{align*}
For example, 
\begin{align*}
\wu_{I}^{(0)}=\wu_{\alpha_1}\cdots\wu_{\alpha_p} \ \ \ \ \mbox{ and } \ \ \ \ \wu_{I}^{(p)}=\frac{w(\alpha_1)}{w_{\id}}\cdots\frac{w(\alpha_p)}{w_{\id}}.
\end{align*}
Also note that, if $w_1=\cdots=w_n=0$, then $\wu_{I}^{(r)}=0$ for $r\geq1$. In this case, we denote $u_I :=\wu_I^{(0)}=\prod_{\alpha \in I} u_{\alpha}$.

It is known that the equivariant Schubert structure constant $\tilde{c}_{\lambda\mu}^{\nu}$ for $H_{\trR}^*(\Gr)$ is an element of $\Z[u_1,\cdots,u_{n-1}]$ 
\[
 \tilde{c}_{\lambda\mu}^{\nu}=\sum_{|I| = l(\lambda) + l(\mu) -l(\nu) } c(\lambda,\mu,\nu;I) u_{I}, \ \  c(\lambda,\mu,\nu;I) \in \Z
\]
where $I$ runs over collections of pairs $(i,j)$ of integers in $[n]$ with $i>j$ as above. For example, Knutson-Tao (\cite{KT}) computed the number $c(\lambda,\mu,\nu;I)$ in terms of the equivariant puzzles: with their notations, we have
\begin{align*}
 c(\lambda,\mu,\nu;I) = 
| \{ \text{equivariant puzzles $P$} \mid \text{$\partial P=\Delta_{\lambda\mu}^{\nu}$ and $\text{wt}(P)=u_{I}$} \} |.
\end{align*}

Now we state the main theorem of this section.
\begin{theorem} \label{main theorem}
Let $\lambda,\mu,\nu\in\str $, then
\begin{align}\label{statement of main thm}
\wc_{\lambda\mu}^{\nu}=\sum_{\nu\geq\eta\geq\lambda,\mu}\sum_{I}\sum_{r=0}^{|I|} c(\lambda,\mu,\eta;I) K_{1^r \eta}^{\nu} \wu_{I}^{(r)}
\end{align}
where $I=\{\alpha_1,\cdots, \alpha_{|I|}\}$ runs over collections of pairs $(i,j)$ of integers in $[n]$ with $i>j$. 
\end{theorem}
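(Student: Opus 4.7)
The plan is to transport the product to the isomorphic ring $H_{\KC}^*(\aGrt)$ via $\whmap^*$ (Proposition \ref{key prop}) and compute $\aS_\lambda\cdot\aS_\mu$ directly in the Schubert basis $\{\aS_\nu\}$, but decomposing coefficients with respect to the $\Q[\wR^{\!\!*}]$-module structure rather than the $\Q[\trR^*]$-module structure. Since $\aS_\lambda=\hmap^*(\tS_\lambda)$ and $\hmap^*$ is a $\Q[\trR^*]$-algebra homomorphism, the ordinary equivariant Schubert expansion pulls back to
\begin{equation*}
\aS_\lambda\cdot\aS_\mu \;=\; \sum_{\eta\geq\lambda,\mu}\sum_{I} c(\lambda,\mu,\eta;I)\, u_I\cdot\aS_\eta,
\end{equation*}
where each $u_I=u_{\alpha_1}\cdots u_{\alpha_p}$ acts through the inclusion $\Q[\trR^*]\hookrightarrow\Q[\KC^*]$.

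The heart of the argument is the translation identity in $H_{\KC}^*(\aGrt)$ provided by Proposition \ref{translation formula}: $u_\alpha\cdot 1=\wu_\alpha\cdot 1+\tfrac{w(\alpha)}{w_{\id}}\aS_{\di}$. Taking the product over $\alpha\in I$ and expanding (everything commutes in the cohomology ring) gives
\begin{equation*}
u_I\cdot 1 \;=\; \prod_{\alpha\in I}\!\left(\wu_\alpha+\tfrac{w(\alpha)}{w_{\id}}\aS_{\di}\right) \;=\; \sum_{r=0}^{|I|}\wu_I^{(r)}\,(\aS_{\di})^r,
\end{equation*}
where the second equality is the generating-function definition of $\wu_I^{(r)}$ specialized at $Q=\aS_{\di}$: the subset $\{s_1<\cdots<s_r\}\subseteq\{1,\dots,p\}$ records which factors contribute $\aS_{\di}$ and which contribute $\wu_\alpha$. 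Multiplying by $\aS_\eta$ and applying the defining relation $(\aS_{\di})^r\aS_\eta=\sum_\nu K_{1^r\eta}^\nu\aS_\nu$ yields
\begin{equation*}
u_I\cdot\aS_\eta \;=\; \sum_{r=0}^{|I|}\sum_\nu K_{1^r\eta}^\nu\,\wu_I^{(r)}\,\aS_\nu.
\end{equation*}

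Substituting into the preceding expansion and collecting the coefficient of $\aS_\nu$ gives exactly the right-hand side of (\ref{statement of main thm}). The support condition $\nu\geq\eta\geq\lambda,\mu$ is automatic: $c(\lambda,\mu,\eta;I)=0$ unless $\eta\geq\lambda,\mu$, and $K_{1^r\eta}^\nu=0$ unless $\nu\geq\eta$ by Lemma \ref{lem:K1eta}. Since every coefficient on the right lies in $\Q[\wR^{\!\!*}]\subset\Q[\KC^*]$ and $\{\aS_\nu\}$ is a $\Q[\wR^{\!\!*}]$-basis of $H_{\KC}^*(\aGrt)$ via $\whmap^*$ (Proposition \ref{can prop 120}), comparing with $\aS_\lambda\aS_\mu=\sum_\nu\wkmap^*(\wc_{\lambda\mu}^\nu)\aS_\nu$ identifies $\wc_{\lambda\mu}^\nu$ with the claimed formula. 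The only real work is bookkeeping; the one nontrivial verification is matching the expansion of $\prod_\alpha(\wu_\alpha+\tfrac{w(\alpha)}{w_{\id}}\aS_{\di})$ with the combinatorial definition of $\wu_I^{(r)}$ given in the paper, and confirming that the two independent vanishing statements combine to give the stated range of summation.
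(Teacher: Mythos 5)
Your proposal is correct and follows essentially the same route as the paper's proof: pull back to $H_{\KC}^*(\aGrt)$, expand $\tilde{c}_{\lambda\mu}^{\eta}$ in the $u_I$, substitute $u_{\alpha}\cdot 1 = \wu_{\alpha}\cdot 1 + \tfrac{w(\alpha)}{w_{\id}}\aS_{\di}$ from Proposition \ref{translation formula}, expand the product to produce the $\wu_I^{(r)}(\aS_{\di})^r$ terms, and absorb $(\aS_{\di})^r\aS_{\eta}$ via $K_{1^r\eta}^{\nu}$ before comparing coefficients over $\Q[\wR^{\!\!*}]$. The only differences are cosmetic (you make the support conditions and the basis citation explicit), so no further comment is needed.
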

\begin{proof}
Since the map $\hmap^*$ is an isomorphism of rings over $\Q[\trR^*]$,  we have $\aS_{\lambda}\aS_{\mu}=\sum_{\nu} \tilde{c}_{\lambda\mu}^{\nu}\aS_{\nu}$ in $H_{\KC}(\aGrt)$.  Lemma \ref{translation formula} allows us to write
\begin{align*}
\tilde{c}_{\lambda\mu}^{\eta} 
&= \sum_{I} c(\lambda,\mu,\eta;I) \Big(\wu_{\alpha_1}+\frac{w(\alpha_1)}{w_{\id}}\aS_{\di}\Big)\cdots \Big(\wu_{\alpha_p}+\frac{w(\alpha_p)}{w_{\id}}\aS_{\di}\Big)\\
&= \sum_{I}\sum_{r=0}^{p} c(\lambda,\mu,\eta;I) \wu_{I}^{(r)} (\aS_{\di})^{r}.
\end{align*}
where we denoted $p:=|I|$. Therefore, 
\begin{align*}
\aS_{\lambda}\aS_{\mu} 
&=\sum_{\eta\geq\lambda,\mu}\sum_{I}\sum_{r=0}^{|I|} c(\lambda,\mu,\eta;I) \wu_{I}^{(r)} (\aS_{\di})^{r} \aS_{\eta} \\
&=\sum_{\eta\geq\lambda,\mu}\sum_{I}\sum_{r=0}^{|I|}
c(\lambda,\mu,\eta;I) \wu_{I}^{(r)}
\sum_{\nu\geq\eta} K_{1^r \eta}^{\nu} \aS_{\nu} \\
&=\sum_{\nu}\left(\sum_{\nu\geq\eta\geq\lambda,\mu}\sum_{I}\sum_{r=0}^{|I|}
c(\lambda,\mu,\eta;I) \wu_{I}^{(r)} K_{1^r \eta}^{\nu}\right) \aS_{\nu}.
\end{align*}
Since the coefficients are in $\Q[\wR^{\!\!*}]$, this proves the  desired formula.
\end{proof}
\begin{remark}
From the equivariant weighted Pieri rule, we can derive a recursive formula for the structure constants $\wc_{\lambda\mu}^{\nu}$, in the exactly same way shown in \cite[Theorem 3]{KT}:
\begin{equation}\label{recursive}
 \left(\wS_{\di}|_{\nu} - \wS_{\di}|_{\lambda}  \right)\wc_{\lambda\mu}^{\nu} = \left( \sum_{\lambda' \to \lambda} \frac{w_{\id}}{w_{\lambda}} \wc_{\lambda'\mu}^{\nu} - \sum_{\nu \to \nu'} \frac{w_{\id}}{w_{\nu'}}\wc_{\lambda\mu}^{\nu'}\right).
\end{equation}
However this equation plays no role in the derivation of our main formula (\ref{statement of main thm}), while the recursive formula in \cite{KT} plays a crucial role in their process of obtaining the original puzzle formula for $\tilde{c}_{\lambda\mu}^{\nu}$.
\end{remark}
\subsection{Positivity}
The equivariant positivity of \cite{GrahamPos} guarantees that the structure constants $\tilde{c}_{\lambda\mu}^{\nu}$ for $\Gr$ are polynomials in $u_1,\cdots,u_{n-1}$ with non-negative coefficients. In analogy to this fact, we prove the following equivariant positivity theorem for $\wGr$.
\begin{theorem}\label{equivariant positivity}
If $w_1\leq w_2\leq\cdots\leq w_n$, then $\wc_{\lambda\mu}^{\nu}$ is a polynomial in $\wu_1,\cdots,\wu_{n-1}$ with non-negative coefficients. 
\end{theorem}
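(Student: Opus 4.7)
The plan is to show that each factor appearing in the formula \eqref{statement of main thm} of Theorem \ref{main theorem} is individually a non-negative $\Q$-combination of monomials in $\wu_1,\dots,\wu_{n-1}$; since sums and products of such polynomials are again non-negative, the theorem will follow. The three types of factors are the classical equivariant puzzle coefficients $c(\lambda,\mu,\eta;I)$, the expressions $\wu_I^{(r)}$, and the iterated Pieri coefficients $K_{1^r\eta}^\nu$ of Lemma \ref{lem:K1eta}.

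Positivity of $c(\lambda,\mu,\eta;I)\in\Z_{\ge 0}$ is Graham's equivariant positivity for $\Gr$ (e.g.\ the Knutson-Tao puzzle formula cited just before Theorem \ref{main theorem}). For the factors $\wu_I^{(r)}$, I would telescope the defining relation \eqref{def of wu_alpha} to obtain $\wu_{(i,j)}=\wu_j+\wu_{j+1}+\cdots+\wu_{i-1}$ for any $i>j$, so that every $\wu_\alpha$ is already a non-negative integer combination of $\wu_1,\dots,\wu_{n-1}$; the hypothesis $w_1\le\cdots\le w_n$ then ensures $w(\alpha)/w_\id\ge 0$ for every pair $\alpha=(i,j)$ with $i>j$, while $w_\id>0$ automatically. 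Hence every summand in the explicit formula for $\wu_I^{(r)}$ is a non-negative rational multiple of a product of $\wu_\alpha$'s.

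The main obstacle is positivity of $K_{1^r\eta}^\nu$. Lemma \ref{lem:K1eta} expresses it as a sum of products of manifestly positive scalars $w_\nu/w_\id$, $w_\id/w_{\nu^q}$ and powers of expressions of the form $\wy_\id-\frac{w_\id}{w_\mu}\wy_\mu=\wS_\di|_\mu$, so it suffices to prove each such expression is a non-negative $\Q$-combination of the $\wu_k$'s for every $\mu\in\str$. I would start from the identity $y_\id-y_\mu=\sum_\alpha n_\alpha u_\alpha$ with $n_\alpha\in\Z_{\ge 0}$, which holds because $\id$ is the minimum in the Bruhat order: writing $\id\setminus\mu=\{a_1<\cdots<a_p\}$ and $\mu\setminus\id=\{b_1<\cdots<b_p\}$, one has $a_i\ge n-d+1>n-d\ge b_i$ for all $i$, hence $y_\id-y_\mu=\sum_i u_{(a_i,b_i)}$. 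Interpreted in $\Q[\KC^*_\mu]$ this reads $\aS_\di|_\mu=\sum_\alpha n_\alpha u_\alpha$. Substituting the identity of Proposition \ref{translation formula} restricted to the fixed point $\mu$, namely $u_\alpha=\wu_\alpha+\frac{w(\alpha)}{w_\id}\aS_\di|_\mu$, and using $\sum_\alpha n_\alpha w(\alpha)=w_\id-w_\mu$ (obtained by specializing $y_i\mapsto w_i$ in the expansion of $y_\id-y_\mu$), one solves the resulting linear equation for $\wS_\di|_\mu$ to obtain
\begin{equation*}
\wS_\di|_\mu=\wy_\id-\frac{w_\id}{w_\mu}\wy_\mu=\frac{w_\id}{w_\mu}\sum_\alpha n_\alpha\wu_\alpha,
\end{equation*}
which is visibly a non-negative $\Q$-combination of $\wu_1,\dots,\wu_{n-1}$. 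Feeding this back into Lemma \ref{lem:K1eta} gives positivity of $K_{1^r\eta}^\nu$, and combining the three positivity statements completes the proof.
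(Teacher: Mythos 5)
Your proposal is correct. It follows the paper's overall strategy exactly — reduce to the non-negativity of the three kinds of factors in Theorem \ref{main theorem}, namely $c(\lambda,\mu,\eta;I)$, $\wu_I^{(r)}$, and $K_{1^r\eta}^{\nu}$, with the first two handled just as in the paper — but you prove the crucial sub-claim, that $\wS_{\di}|_{\mu}=\wy_{\id}-\frac{w_{\id}}{w_{\mu}}\wy_{\mu}$ is a non-negative combination of $\wu_1,\dots,\wu_{n-1}$, by a different argument. The paper proceeds by induction on $l(\mu)$, using a covering relation $(a,a+1)\mu=\mu'$ and the one-step identity $\wy_{\id}-\frac{w_{\id}}{w_{\mu}}\wy_{\mu} = \frac{w_{\mu'}}{w_{\mu}}\bigl(\wy_{\id}-\frac{w_{\id}}{w_{\mu'}}\wy_{\mu'}\bigr) + \frac{w_{\id}}{w_{\mu}}\wu_a$, which exhibits positivity but leaves the coefficients implicit. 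You instead pair $\id\setminus\mu$ with $\mu\setminus\id$ to write $y_{\id}-y_{\mu}=\sum_{\alpha}n_{\alpha}u_{\alpha}$ with $n_{\alpha}\in\Z_{\geq 0}$, restrict Proposition \ref{translation formula} (equivalently Lemma \ref{aSdiv=y_id+z}) to the fixed point $\mu$, and solve the resulting linear equation using $\sum_{\alpha}n_{\alpha}w(\alpha)=w_{\id}-w_{\mu}$ to get the closed form $\wS_{\di}|_{\mu}=\frac{w_{\id}}{w_{\mu}}\sum_{\alpha}n_{\alpha}\wu_{\alpha}$; I checked this against the paper's recursion and it agrees. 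Your route is slightly longer but buys an explicit non-negative expansion of $\wS_{\di}|_{\mu}$ in the $\wu$-basis (with coefficients $\frac{w_{\id}}{w_{\mu}}n_{\alpha}$), whereas the paper's induction is shorter and more self-contained; both correctly observe that the ordering hypothesis $w_1\leq\cdots\leq w_n$ is only needed for the factors $\wu_I^{(r)}$.
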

\begin{proof}
We look at Theorem \ref{main theorem}. By the assumption, it is clear that $\wu_{I}^{(r)}$ is a polynomial in $\wu_1,\cdots,\wu_{n-1}$ with non-negative coefficients. For the positivity of $K_{1^r \eta}^{\nu}$, it suffices to show that $\wy_{\id}-\frac{w_{\id}}{w_{\nu}}\wy_{\nu} $ is a polynomial in $\wu_1,\cdots,\wu_{n-1}$ with non-negative coefficients for all $\nu\not=\id$. There exists $a \in [n]$ such that $(a,a+1)\nu=\nu'$ and $l(\nu')=l(\nu)-1$. The straightforward computation shows
\[
\wy_{\id}-\frac{w_{\id}}{w_{\nu}}\wy_{\nu} = \frac{w_{\nu'}}{w_{\nu}}\left(\wy_{\id}-\frac{w_{\id}}{w_{\nu'}}\wy_{\nu'} \right) + \frac{w_{\id}}{w_{\nu}}\wu_a
\]
Therefore the claim follows by the induction on the length of $\nu$. 
\end{proof}
\begin{remark}
Our positivity theorem holds for all weighted Grassmannians in the following sense:
for a given $\wGr$ with the weight $w=(w_1,\cdots, w_n)$, we can always perform a permutation on the basis $\{e_1,\cdots, e_n\}$ of $\C^n$ so that the new order on the weight is non-decreasing. Then we can re-define the Schubert classes $\{\wS_{\lambda}\}_{\lambda}$ to make sure that the structure constants are positive. 
\end{remark}
\subsection{Structure Constants for Ordinary Cohomology}
For each $\lambda\in\str$, define $ \w S_{\lambda} := \zeta^* (\wS_{\lambda}) \in H^*(\wGr )$ where $\zeta^*$ is the pullback along an inclusion as a fiber $\zeta: \wGr \to \wR\times_{\wR}\wGr$. Equivalently, this can be defined by $\w S_{\lambda}:=(\whmap^*)^{-1}[\af\Omega_{\lambda}]_{\wD}$ under the map $\whmap^*:H_{\wD}^*(\aGrt)\rightarrow H^*(\wGr)$.
\begin{corollary} \label{cor of main}
Let $\lambda,\mu,\nu\in\str$.  The structure constant $\w c_{\lambda\mu}^{\nu}$ is given by
\begin{align*}
\w c_{\lambda\mu}^{\nu}  = \sum_{\nu\geq\eta\geq\lambda,\mu}\sum_{\substack{\nu = \nu^0 \rightarrow \\ \cdots\rightarrow \nu^l = \eta}}\frac{ \tilde{c}_{\lambda\mu}^{\eta}(u_i=w_{i+1}-w_i, i=1,\cdots,n-1)  }{w_{\nu^1} \cdots w_{\nu^l}}
\end{align*}
if $l(\lambda)+l(\mu) =l(\nu)$ and $\w c_{\lambda\mu}^{\nu}=0$ if otherwise. Furthermore, if $w_1\leq w_2\leq \cdots\leq w_n$, $ \w c_{\lambda\mu}^{\nu}$ is non-negative. 
\end{corollary}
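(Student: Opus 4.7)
The plan is to deduce the corollary directly from Theorem \ref{main theorem} by passing to ordinary cohomology via the fiber inclusion $\zeta\colon\wGr\hookrightarrow E\wR\times_{\wR}\wGr$. Since $\zeta^*\wS_\nu = \w S_\nu$ and $\{\w S_\nu\}_\nu$ is a $\Q$-basis of $H^*(\wGr)$ by Proposition \ref{vanishing of odd}, applying $\zeta^*$ to $\wS_\lambda\wS_\mu = \sum_\nu \wc_{\lambda\mu}^\nu\wS_\nu$ gives $\w c_{\lambda\mu}^\nu = \zeta^*(\wc_{\lambda\mu}^\nu)$. Since the composition $\wGr\to E\wR\times_\wR\wGr\to B\wR$ is null-homotopic, $\zeta^*$ annihilates $H^{>0}(B\wR)$, so $\w c_{\lambda\mu}^\nu$ is the constant term of the polynomial $\wc_{\lambda\mu}^\nu\in\Q[\wR^{\!\!*}]$. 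As $\wc_{\lambda\mu}^\nu$ is homogeneous of degree $2(l(\lambda)+l(\mu)-l(\nu))$, its constant term is automatically zero when $l(\lambda)+l(\mu)\neq l(\nu)$, yielding the vanishing half of the statement.

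For the remaining case $l(\lambda)+l(\mu) = l(\nu)$, I would extract the constant term by evaluating the formula of Theorem \ref{main theorem} at $\wu_1=\cdots=\wu_{n-1}=0$, which by (\ref{def of wu_alpha}) and the telescoping relation $\wy_i-\wy_j=\frac{w_i-w_j}{w_{\id}}\wy_{\id}$ also forces $\wu_\alpha=0$ for every pair $\alpha$. In the explicit expansion of $\wu_I^{(r)}$, only the term with $r=|I|$ survives (every other term carries a $\wu$-factor), and it equals $\prod_{k=1}^{|I|} w(\alpha_k)/w_{\id}$. The degree condition together with $|I| = l(\lambda)+l(\mu)-l(\eta) = l(\nu)-l(\eta) = l$ forces $r=l$ in Lemma \ref{lem:K1eta}, hence $\sum_q j_q = r-l = 0$ and every $(\wy_{\id}-\frac{w_{\id}}{w_{\nu^q}}\wy_{\nu^q})^{j_q}$ collapses to $1$, giving
\[
K_{1^{l}\eta}^{\nu}\big|_{\wu=0}
= \sum_{\nu=\nu^0\to\cdots\to\nu^l=\eta}\frac{w_\nu}{w_{\id}}\prod_{q=0}^{l}\frac{w_{\id}}{w_{\nu^q}}
= \sum_{\nu=\nu^0\to\cdots\to\nu^l=\eta}\frac{w_{\id}^{\,l}}{w_{\nu^1}\cdots w_{\nu^l}}.
\]
Multiplying against $\wu_I^{(|I|)}$, the factors $w_{\id}^{\pm l}$ cancel, and the remaining sum over $I$ is recognized via the evaluation identity $\sum_I c(\lambda,\mu,\eta;I)\prod_k w(\alpha_k)= \tilde c_{\lambda\mu}^{\eta}(u_i = w_{i+1}-w_i)$, which is just the substitution $y_i\mapsto w_i$ applied to the $u_\alpha=y_i-y_j$ expansion of $\tilde c_{\lambda\mu}^{\eta}$. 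Summing over $\eta$ and paths reproduces the claimed formula.

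The positivity assertion follows immediately: under $w_1\leq\cdots\leq w_n$ we have $u_i = w_{i+1}-w_i\geq 0$, so Graham's positivity for $\tilde c_{\lambda\mu}^\eta$ (e.g.\ via \cite{KT}) makes the numerator non-negative, while the denominators $w_{\nu^q}\geq a\geq 1$ are positive. The main obstacle I anticipate is purely bookkeeping: tracking the cancellation of $w_{\id}$-powers between $K_{1^r\eta}^\nu$ and $\wu_I^{(r)}$, and verifying that the forced equality $r=l=|I|$ under the degree hypothesis simultaneously kills every $\wy$-dependence so that the constant-term extraction and the formal $\wu_i=0$ specialization agree. Beyond this, no conceptual difficulty arises since all the structural work has been done in Theorem \ref{main theorem} and Lemma \ref{lem:K1eta}.
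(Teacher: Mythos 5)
Your proposal is correct and follows essentially the same route as the paper: specialize the formula of Theorem \ref{main theorem} at $\wu_1=\cdots=\wu_{n-1}=0$, observe that only the terms with $r=|I|=l(\nu)-l(\eta)$ survive so that $\wu_I^{(r)}$ and $K_{1^r\eta}^{\nu}$ collapse to $\prod_k w(\alpha_k)/w_{\id}$ and $w_{\id}^{\,l}/(w_{\nu^1}\cdots w_{\nu^l})$ respectively, and recognize $\sum_I c(\lambda,\mu,\eta;I)\prod_k w(\alpha_k)$ as $\tilde c_{\lambda\mu}^{\eta}(u_i=w_{i+1}-w_i)$. Your additional justification that the non-equivariant limit is the augmentation (constant term) via $\zeta^*$, and the degree argument for the vanishing when $l(\lambda)+l(\mu)\neq l(\nu)$, are points the paper leaves implicit, but they do not change the method.
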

\begin{proof}
After the evaluation, each term in (\ref{statement of main thm}) can survive only if $l: =l(\nu)-l(\eta)=r=|I|$:
\begin{eqnarray*}
\wc_{\lambda\mu}^{\nu}|_{\wu_1=\cdots,\wu_{n-1}=0} 
&=&\sum_{\nu\geq\eta\geq\lambda,\mu}\sum_{|I|=l}c(\lambda,\mu,\eta;I) K_{1^l \eta}^{\nu} \wu_{I}^{(l)}\\
&=&\sum_{\nu\geq\eta\geq\lambda,\mu}\sum_{|I|=l}\sum_{\substack{\nu = \nu^0 \rightarrow \\ \cdots \rightarrow \nu^l = \eta}} c(\lambda,\mu,\eta;I) \frac{w(\alpha_1)\cdots w(\alpha_l)}{w_{\nu^1} \cdots w_{\nu^l}}\\
&=&\sum_{\nu\geq\eta\geq\lambda,\mu}\sum_{\substack{\nu = \nu^0 \rightarrow \\ \cdots\rightarrow \nu^l = \eta}}\frac{ \tilde{c}_{\lambda\mu}^{\eta}(u_i=w_{i+1}-w_i, i=1,\cdots,n-1)  }{w_{\nu^1} \cdots w_{\nu^l}}\\
\end{eqnarray*}
The positivity is a direct consequence of the equivaraiant positivity (Theorem \ref{equivariant positivity}).
\end{proof}
\subsection{Examples}
\begin{example}[Weighted Projective Space $\mathrm{wGr}(1,n)$]  The weighted projective space $\C\P_b=\C\P_{b_1,\cdots,b_n}$ is the quotient of $\C^n\backslash \{0\}$ by one dimensional torus $D_b:=\left\{(s^{b_1},\dots, s^{b_n}) \ |\ s\in \C^{\times} \right\} \subset R:=(\C^{\times})^n$ where $R$ acts on $\C^n\backslash \{0\}$ in the standard way. Let $\bar R_b:=R/D_b$ and $\{z_1,\cdots, z_n\}$ the standard basis of $(\Lie R)_{\Z}^*$. Then it is well-known that the $\bar R_b$-equivariant cohomology of $\C\P_{b_1,\cdots,b_n}$ is the corresponding Stanley-Reisner ring by regarding $\C\P_{b_1,\cdots,b_n}$ as a toric variety (c.f. \cite{DavisJanuszkiewicz91}):
\[
H_{\bar R_b}^*(\C\P_b) = \frac{\Q[z_1,\cdots, z_n]}{(z_1\cdots z_n)}.
\]
In our notation, $\mathrm{wGr}(1,n) = \C\P_b$ where $b_i = w_i + a$ and $K$ is related to $R$ via the map 
\[
K \to R;  (t_1,\dots, t_n, s) \mapsto (st_1,\dots, st_n).
\]
Therefore we can identify $H_{\bar R_b}^*(\C\P_b)$ with a subring of $H_{\wR}^*(\mathrm{wGr}(1,n))$ by $z_i = y_i + z$. With this identification, the Schubert classes $\wS_{\lambda}$ are given by 
\begin{equation}\label{basisProj}
\wS_{\{n\}}=1,\ \  \wS_{\{n-1\}} = z_n , \ \ \cdots ,\ \ \wS_{\{k\}} = z_{k+1} \cdots z_n , \ \  \cdots\ , \ \ \wS_{\{1\}} = z_2 \cdots z_n.
\end{equation}
where the Bruhat order is $\{n\} \leq \cdots \leq\{1\}$. The equivariant weighted Pieri rule gives 
\[
\wS_{\{n-1\}}\cdot \wS_{\{k\}} = \left(z_n - \frac{b_n}{b_k}z_k\right) \wS_{\{k\}} + \frac{b_n}{b_k} \wS_{\{k-1\}}.
\]
This is actually obvious in the presentation $\Q[z_1,\cdots, z_n]/(z_1\cdots z_n)$.
\end{example}
\begin{example}[Relation to the work of Kawasaki \cite{Kawasaki}]\label{kawasakibasis} In this example, all cohomologies are over $\Z$-coefficients. The integral cohomology of the weighted projective space $H^*(\C\P_{b};\Z)$ is known to be a free $\Z$-module generated by the \emph{Kawasaki basis} \cite{Kawasaki}. Namely, the map 
\[
\zeta:\C^n\backslash \{0\} \to \C^n\backslash \{0\}, \ \ \ (x_1,\cdots, x_n) \mapsto (x_1^{b_1},\cdots, x_n^{b_n})
\]
induces a map $\bar\zeta: \C\P^{n-1} \to \C\P_b$ and the inclusion $\bar\zeta^*: H^*(\C\P_b;\Z) \to H^*(\C\P^{n-1};\Z)$. Following \cite{DavisJanuszkiewicz91}, we represent $H^*(\C\P^{n-1};\Z)$ as
\[
\frac{\Z[z_1,\cdots, z_n]}{(z_1\cdots z_n, u_1,\cdots,u_{n-1})}
\]
where $u_i := z_{i+1} - z_i$. Then after we identity $H^*(\C\P_b;\Z)$ with the image of $\bar\zeta^*$, it is a free $\Z$-module generated by 
\[
\gamma_1:=l_1^b=1,\  \  \gamma_2:=l_2^b z_n, \ \  \gamma_3:=l_3^b z_{n-1}z_n,\ \  \cdots,\ \  \gamma_n:=l_n^b z_2 \cdots z_n
\]
where
\[
l_k^b := \mbox{l.c.m. of } \left\{\frac{b_{i_1} \cdots b_{i_k}}{\gcd(b_{i_1},\cdots, b_{i_k})} \Big|  1\leq i_1 <\cdots < i_k \leq n \right\}.
\]

On the other hand, the cohomology $H_{D_b}^*(\C^n\backslash \{0\};\Z)$ is known to be
\[
\frac{\Z[z_1,\cdots, z_n]}{(z_1\cdots z_n, u_1^b, \cdots, u_{n-1}^b)}
\]
where $\{u_i^b\}$ is a $\Z$-basis of $\Lie(\bar R_b)_{\Z}^* \subset (\Lie R)_{\Z}^*$ (c.f. Example 6.1 \cite{LMM}). We can regard our \emph{non-equivariant} Schubert classes $\w S_{\{k\}}$ as the monomial $z_{k+1} \cdots z_n$ in $H_{D_b}^*(\C^n\backslash\{0\};\Z)$. Together with the homomorphism $R \to R; (s_1,\dots, s_n) \mapsto (s_1^{b_1}, \dots, s_n^{b_n})$, $\zeta$ induces a map 
\[
\omega:  \C\P^{n-1} \to ED_b \times_{D_b} (\C^n\backslash\{0\})
\]
and the pullback $\omega^*$ is given by
\[
\omega^*: \frac{\Z[z_1,\cdots, z_n]}{(z_1\cdots z_n, u_1,\cdots,u_{n-1})} \to \frac{\Z[z_1,\cdots, z_n]}{(z_1\cdots z_n, u_1^b, \cdots, u_{n-1}^b)}; \ \ \ z_i \mapsto b_i z_i.
\]
Since $\bar\zeta$ factors through $\omega$ and the projection $\pi: ED_b\times_{D_b} \C^n\backslash \{0\} \to \C\P_b$ and by the fact that $H_{D_b}^*(\C^n\backslash\{0\};\Z)$ has no $\Z$-torsions in the degrees between $0$ and $2(n-1)$ (see Theorem 4.2 \cite{HolmWeighted}), we can conclude that the pullbacks of the Kawasaki's basis along the projection $\pi$ are the following multiples of our Schubert classes:
\[
\pi^*(\gamma_1)= \af S_{\{n\}} \ \ \mbox{ and }\ \   \pi^*(\gamma_{k})=\frac{l_{k}^b}{b_{n-k+2}b_{n-k+3}\cdots b_n}\af S_{\{n-k+1\}}, \ \ \ k=2,\cdots, n.
\]
\end{example}
\begin{example}[$\mathrm{wGr}(2,4)$] Here we demonstrate the computation of the product $\wS_{23} \wS_{23}$. By the upper triangularity of the GKM description of $\wS_{23}$, the product must be written by
\[
\wS_{23} \wS_{23} = \wc_{23,23}^{23} \wS_{23} + \wc_{23,23}^{13} \wS_{13} + \wc_{23,23}^{12} \wS_{12}.
\]
We can compute these coefficients from the formula of the following product for ordinary Grassmannian
\[
\tS_{23}\tS_{23} = (y_4-y_2) (y_4 - y_3) \tS_{23} + (y_4 - y_3) \tS_{13} + \tS_{12}.
\]
That is, we have
\begin{align*}
&c(23,23,23;I)=
\begin{cases}
 1 \quad \text{if $I=\{(4,2), (4,3)\}$}, \\
 0 \quad \text{otherwise},
\end{cases}
\\
&c(23,23,13;I)=
\begin{cases}
 1 \quad \text{if $I=\{(4,3)\}$}, \\
 0 \quad \text{otherwise},
\end{cases}
\\
&c(23,23,12;I)=
\begin{cases}
 1 \quad \text{if $I=\emptyset$}, \\
 0 \quad \text{otherwise},
\end{cases}
\end{align*}
and other $c(23,23,\eta;I)$'s are zero.
Here are the computation:
\begin{eqnarray*}
\wc_{23,23}^{23}&=& \left(\wy_4-\wy_2 - \frac{w_4-w_2}{w_{\id}}\wy_{\id}\right)\left(\wy_4-\wy_3 - \frac{w_4-w_3}{w_{\id}}\wy_{\id}\right) \\
&& + \Big(\wy_{\id}-\frac{w_{\id}}{w_{23}}\wy_{23}\Big) \left( \frac{w_4-w_2}{w_{\id}}\Big(\wy_4-\wy_3 - \frac{w_4-w_3}{w_{\id}}\wy_{\id}\Big) + \frac{w_4-w_3}{w_{\id}}\Big(\wy_4-\wy_2 - \frac{w_4-w_2}{w_{\id}}\wy_{\id}\Big) \right) \\
&& + \Big(\wy_{\id}-\frac{w_{\id}}{w_{23}}\wy_{23}\Big)^2 \frac{w_4-w_2}{w_{\id}} \frac{w_4-w_3}{w_{\id}}\\
\wc_{23,23}^{13} &=& \wy_4-\wy_3 - \frac{w_4-w_3}{w_{\id}}\wy_{\id}
+ \Big(\wy_{\id}-\frac{w_{\id}}{w_{13}}\wy_{13}\Big)\frac{w_4-w_3}{w_{\id}}\\
&& + \frac{w_4-w_2}{w_{23}}\Big(\wy_4-\wy_3 - \frac{w_4-w_3}{w_{\id}}\wy_{\id}\Big)+\frac{w_4-w_3}{w_{23}}\Big(\wy_4-\wy_2 - \frac{w_4-w_2}{w_{\id}}\wy_{\id}\Big) \\
&& +  \frac{w_{\id}}{w_{23}}\left(\Big(\wy_{\id}-\frac{w_{\id}}{w_{13}}\wy_{13}\Big)+\Big(\wy_{\id}-\frac{w_{\id}}{w_{23}}\wy_{23}\Big)\right)\frac{w_4-w_2}{w_{\id}}\frac{w_4-w_3}{w_{\id}}\\
\wc_{23,23}^{12} &=&  1 + \frac{w_4-w_3}{w_{13}} + \frac{w_4-w_2}{w_{23}}\frac{w_4-w_3}{w_{13}}
\end{eqnarray*}
Similarly we can also work out
\[
\wS_{23}\wS_{14} = \wc_{23,14}^{13}\wS_{13} + \wc_{23,14}^{12}\wS_{12}
\]
from $\tS_{23}\tS_{14} = (y_4 - y_1) \tS_{13}$ where
\begin{eqnarray*}
\wc_{23,14}^{13} &=& \Big(\wy_4-\wy_1 - \frac{w_4-w_1}{w_{\id}}\wy_{\id} \Big) + \Big( \wy_{\id}-\frac{w_{\id}}{w_{13}}\wy_{13} \Big) \frac{w_4-w_1}{w_{\id}} \\
\wc_{23,14}^{12} &=& \frac{w_4-w_1}{w_{13}}.
\end{eqnarray*}
\end{example}

\section*{Funding}
This work was supported by JSPS Research Fellowships for Young Scientists to H.A; and the National Research Foundation of Korea (NRF) grants funded by the Korea government (MEST) [2012-0000795, 2011-0001181] to T.M. 

\vspace{0.15in}
\noindent\textbf{Acknowledgment.}
The first author  is particularly indebted to Takashi Otofuji for many valuable discussions and helpful suggestions. The second author is grateful to Allen Knutson for teaching him about the subject and many inspirational discussions. He also would like to express his gratitude to the Algebraic Structure and its Application Research Institute at KAIST for providing him an excellent research environment in 2011-2012. The authors also would like to thank the referee for a thorough reading that improved the paper greatly, as well as Mikiya Masuda, Shintaro Kuroki, Hiroaki Ishida and Yukiko Fukukawa for organizing of the international conference \emph{Toric Topology in Osaka 2011} which made our collaboration possible.


\vspace{0.2in}
\textit{Hiraku Abe,
Osaka City University Advanced Mathematical Institute, 
3-3-138 Sugimoto, Sumiyoshi-ku Osaka 558-8585 JAPAN}

\textit{E-mail address: } hirakuabe@globe.ocn.ne.jp 
\\

\textit{Tomoo Matsumura,
Department of Mathematical Sciences,  Algebraic Structure and its Applications Research Center, KAIST, 291 Daehak-ro Yuseong-gu, Daejeon 305-701, SOUTH KOREA}

\textit{E-mail address: } tomoomatsumura@kaist.ac.kr
\\

\end{document}